\newcommand{\EE}{\mathcal{E}}
\newcommand{\RR}{\mathbb{R}}
\newcommand{\ZZ}{\mathbb{Z}}
\newcommand{\NN}{\mathbb{N}}
\newcommand{\CC}{\mathbb{C}}
\newcommand{\contr}{{\mspace{1mu}\lrcorner\mspace{1.5mu}}}
\DeclareMathOperator{\Hom}{Hom}
\DeclareMathOperator{\Spec}{Spec}
\DeclareMathOperator{\rk}{rk}
\newtheorem{theorem}{Theorem}[section]
\newtheorem*{maintheorem}{Main Theorem}
\newtheorem*{Bconjecture}{Conjecture}
\newtheorem{conjecture}[theorem]{Conjecture}
\newtheorem{proposition}[theorem]{Proposition}
\newtheorem{lemma}[theorem]{Lemma}
\theoremstyle{definition}
\newtheorem{definition}[theorem]{Definition}
\newtheorem{example}[theorem]{Example}
\definecolor{rosso}{RGB}{162,0,0}
\definecolor{verde}{RGB}{0,100,0}
\definecolor{blu}{RGB}{0,0,162}
\title{On degeneracy loci of equivariant bi-vector fields on a smooth toric variety}
\author{Elena Martinengo}
\address{Dipartimento di Matematica ``Giuseppe Peano'', Universit\`a degli Studi di Torino, via Carlo Alberto 10, 10123 Torino, Italy}
\email{elena.martinengo@unito.it}
\keywords{Toric varieties, equivariant bi-vector fields, Poisson manifolds.}
\subjclass[2010]{14M25, 32M25}
\begin{document}
\maketitle
\begin{abstract}
We study equivariant bi-vector fields on a toric variety. We prove that, on a smooth toric variety of dimension $n$, the locus where the rank of
an equivariant  bi-vector field is $\leq 2k$ is not empty and has at least a component of dimension $\geq 2k+1$,  for all integers $k> 0$ such that $2k < n$. The same is true also for $k=0$, if the toric variety is smooth and compact. While for the non compact case, the locus in question has to be assumed to be non empty.
\end{abstract}

\tableofcontents

\section*{Introduction}
Let $X$ be a smooth toric variety associated to a fan $\Sigma \subset N \otimes_\ZZ \RR$, where $N$ is an $n$-dimensional lattice. Our aim is to study degeneracy loci of equivariant bi-vector fields on $X$. A bi-vector field $\pi$ on $X$ is a $2$-form with values in the holomorphic tangent bundle $\Theta_X$. 
Let $\{ z_1,\ldots ,z_n \}$ be a system of local coordinates around a point $x\in X$, then a bi-vector field $\pi\in \Gamma(X, \bigwedge^2 \Theta_X)$ can be written locally as 
\[ \pi= \sum_{i,j=1}^n   \pi^{ij}(z_1, \ldots, z_n)  \frac{\partial}{\partial z_i}\wedge \frac{\partial }{\partial z_j},\] 
where $\Pi=\left(\pi^{ij}(z_1, \ldots, z_n)\right)_{ij}$ is an antisymmetric matrix of homolorphic functions defined locally around $x$. 
The rank of the bi-vector field $\pi$ at the point $x$ is the rank of the matrix $\Pi$ calculated in $x$. The $k$-th degeneracy locus of $\pi$ is
\[   X_{\leq k}:= \{x\in X \mid \rk_x \pi\leq  k\}.\] 
Since we are on a toric variety, it makes sense to consider bi-vector fields that are equivariant  under the action of the torus. It is obvious that their rank is constant on the orbits of the action. We prove the following
\begin{maintheorem}
Let $X$ be a smooth toric variety of dimension $n$ and $\pi \in \Gamma(X, \bigwedge^2 \Theta_X)$ an equivariant bi-vector field on $X$. 
Consider an integer $k > 0$ such that $2k < n$. Then the degeneracy locus
$X_{\leq 2k}$ is not empty and contains at least a component of dimension $\geq 2k+1$.
Moreover, if the degeneracy locus $X_{\leq 0}$ is not empty, it contains at least a curve.
If in addition $X$ is compact, then $X_{\leq 0}\neq \emptyset$. 
\end{maintheorem}
Section \ref{sec.main thm} is dedicated to the proof of this result. 
The main ingredients are the following. In Section \ref{sec.Degeneracy for equivariant}, we write the expression of an equivariant bi-vector field $\pi$ on the open dense torus $(\CC^*)^n \subset X$ and study the behaviour of it under the change of immersion $(\CC^*)^n \hookrightarrow Y_\sigma$ of the torus on the affine toric subvariety $Y_\sigma \subset X$, when varying the maximal cone $\sigma$ in the fan $\Sigma$. 
Looking at the affine charts, we are able to prove that, for all integers $k > 0$ such that $2k < n$, there exists a $(2k+1)$-dimensional orbit of the action of the torus on $X$ contained in the degeneracy locus $X_{\leq 2k}$. For $k=0$, the compactness of $X$ is needed to prove with the same arguments the existence of a $1$-dimensional orbit contained in $X_{\leq 0}$. While for $X$ non compact, we have to assume $X_{\leq 0}\neq \emptyset$ to conclude. 

\medskip 

The idea to investigate the degeneracy loci is not new. In \cite{Bondal}, Bondal studied them for Poisson structures and formulated the following

\begin{Bconjecture}
 Let $X$ be a connected Fano variety of dimension $n$ and $\pi$ a Poisson structure on it. Consider an integer $k \geq 0$ such that $2k < n$. If the degeneracy locus
$X_{\leq 2k}$ is not empty, it contains a component of dimension $\geq 2k+1$.
\end{Bconjecture}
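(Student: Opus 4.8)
The plan is to read $\pi$ as the antisymmetric bundle map $\pi^\sharp\colon \Omega^1_X \to \Theta_X$ and to identify $X_{\leq 2k}$ as the Pfaffian degeneracy locus of $\pi^\sharp$, cut out locally by the $(2k+2)\times(2k+2)$ sub-Pfaffians of the matrix $\Pi$. The first thing I would exploit is the direct link between the \emph{top} Pfaffian and the anticanonical class. When $n$ is even, the power $\pi^{\wedge n/2}$ is a global section of $\wedge^n \Theta_X = -K_X$, and it vanishes exactly where $\rk\pi \leq n-2$; thus $X_{\leq n-2}$ is a member of the anticanonical linear system. Since $X$ is Fano, $-K_X$ is ample, so a nonempty such divisor has pure dimension $n-1 = 2k+1$ for $2k=n-2$. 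For $n$ odd the relevant object is $\pi^{\wedge(n-1)/2}$, a section of $\wedge^{n-1}\Theta_X \cong \Omega^1_X\otimes(-K_X)$ whose zero scheme is $X_{\leq n-3}$; here the two positivities compete, so the maximal case for $n$ odd is \emph{governed} by the Fano hypothesis through this twisted bundle but is not resolved as cleanly. In any event this settles (or isolates) the largest admissible value of $k$ and pins down where ampleness enters.

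For smaller $k$ the expected codimension of $X_{\leq 2k}$, namely $\binom{n-2k}{2}$, exceeds $n-2k-1$ as soon as $n-2k\geq 3$, so a bare degeneracy-locus count (Pragacz--Tu, Fulton--Lazarsfeld) yields only the \emph{expected} dimension and cannot reach $2k+1$. The extra input I would rely on is the integrability $[\pi,\pi]=0$, which separates a Poisson structure from an arbitrary bi-vector field and forces every $X_{\leq 2j}$ to be a Poisson subscheme of $(X,\pi)$. I would then run a descending induction on $k$: on an irreducible component $Z$ of $X_{\leq 2k}$, let $2j\leq 2k$ be the generic rank of $\pi|_Z$; by the Weinstein splitting theorem a general point of $Z$ has a local symplectic leaf of dimension $2j$, so $Z$ fibres over its leaf space and $\dim Z\geq 2j$, while $X_{\leq 2k-2}$ inside $Z$ is controlled by the \emph{transverse} Poisson structure $\pi_T$. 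This reduces the missing part of the inequality to the key case $k=0$ on the transverse slice: a Poisson structure whose rank drops to $0$ at a point should vanish along a positive-dimensional set.

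The hard part, and the reason the statement is still a conjecture in this generality, is that neither ingredient survives the induction alone. Passing from $X$ to a component $Z$ of a degeneracy locus destroys the Fano hypothesis: already for $2k=n-2$, adjunction gives $K_Z=(K_X+Z)|_Z=\OO_Z$, so $Z$ is only Calabi--Yau and the ampleness that drove the base case is gone. The Weinstein reduction is worse, since it is analytic-local and forgets the projective structure entirely; indeed on $\CC^3$ the exact $1$-form $x\,dx+y\,dy+z\,dz$ corresponds (via $\pi\mapsto \iota_\pi\mathrm{vol}$) to a Poisson bivector whose zero locus is the single point at the origin, so the transverse $k=0$ claim is simply \emph{false} off the Fano world. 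The crux is therefore to make the global positivity of $-K_X$ interact with the purely local integrability constraint in a way that is stable under restriction to the strata. The route I would attempt is to work throughout with the ample line bundle $(-K_X)|_Z$ rather than $-K_X|_Z$ itself, and to encode the Pfaffian stratification in a single Poisson-linearised Eagon--Northcott/Buchsbaum--Rim type complex twisted by $-K_X$, to which a Fulton--Lazarsfeld connectedness-and-positivity argument can be applied uniformly across strata; quantifying how the failure of exactness of this complex \emph{improves} precisely when $[\pi,\pi]=0$ is, I expect, the decisive and still-missing step. It is worth noting that the Main Theorem of this paper confirms exactly this pattern in the toric equivariant setting, where torus-equivariance supplies the rigidity that integrability-plus-positivity is meant to supply in general.
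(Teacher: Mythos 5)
The statement you were asked to prove is Bondal's conjecture, and the first thing to say is that the paper does not prove it: it is quoted verbatim as an open conjecture, and the paper's actual contribution (the Main Theorem) is a variant for \emph{equivariant bi-vector fields on smooth toric varieties}, proved by entirely combinatorial means --- the orbit--cone correspondence, the explicit transformation law $B = S\cdot A\cdot S^t$, $\underline{\beta}=R^t\underline{\alpha}$ of Lemma \ref{lemma.cambio di coordinate} under change of maximal cone, and Lemma \ref{lemma.cono} on rays of cones relative to a half-space. So there is no proof in the paper to compare yours against; the only honest benchmark is whether your proposal closes the conjecture, and, as you say yourself, it does not.

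That granted, your partial analysis is sound as far as it goes, and it matches what the paper reports as known. For $n$ even, identifying $X_{\leq n-2}$ with the zero divisor of $\pi^{\wedge n/2}\in\Gamma(X,\bigwedge^n\Theta_X)=\Gamma(X,-K_X)$ is correct, and ampleness of $-K_X$ does settle the case $2k=n-2$; this is essentially the ``maximal rank at the general point'' case the paper attributes to Polishchuk. The expected-codimension count $\binom{n-2k}{2}$ showing that generic degeneracy-locus theory cannot reach dimension $2k+1$, the fact that the rank loci are Poisson subvarieties, and your $\CC^3$ example (an exact integrable $1$-form whose bivector vanishes only at the origin --- the same example, due to Gay, closes the paper) are all correct and correctly deployed to show the transverse $k=0$ statement is false without global positivity. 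The genuine gap is exactly the one you name: your descending induction destroys the Fano hypothesis at the first step (adjunction gives $K_Z=\OO_Z$ for $Z\in|-K_X|$), Weinstein splitting is local-analytic and carries no positivity, and the proposed Poisson-linearised Eagon--Northcott complex with a Fulton--Lazarsfeld connectedness argument is a hope rather than an argument --- no mechanism is exhibited by which $[\pi,\pi]_{SN}=0$ improves the positivity estimates on the strata. Since the conjecture is open, this is not repairable within your scheme; it is worth noting that the paper's escape route is precisely to trade the missing interaction of integrability and positivity for torus-equivariance, under which the rank of $\pi$ is constant on orbits and the whole problem reduces to the sign pattern of the exponent vector $\underline{\alpha}$ and its transforms $R^t\underline{\alpha}$ across the fan --- indeed the paper then needs neither the Poisson condition nor (except for $k=0$) compactness.
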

Recall that a Poisson structure on a smooth complex variety $X$ is a bi-vector field $\pi \in  \Gamma(X, \bigwedge^2 \Theta_X)$ such that the Poisson bracket 
\[ \{f,g\} := \pi 	\contr (df \wedge dg ) \]
satisfies the Jacobi identity on holomorphic functions.

Later in \cite{Beauville}, Beauville conjectured the same property to be true for Poisson structures on smooth compact complex varieties. 
In \cite{Polishchuk}, Polishchuk proved it for the maximal non trivial degeneracy locus in two cases: when $X$ is the projective space and when $\pi$ has maximal rank at the general point.  

The interest of these authors for Poisson structures comes from different areas.
One is the problem of classification of quadratic Poisson structures, i.e. Poisson brackets on the algebra of polynomials in $n$ variables such that the brackets are quadratic forms. These structures arise as tangent spaces to noncommutative deformations of the polynomial algebra itself. 
On the other hand, Poisson manifolds, i.e. smooth complex varieties with a Poisson structure, provide a more flexible notion then the one of hyperk\"ahler manifolds. 

Bondal's conjecture on a toric variety was investigated for the first time by Gay in \cite{Gay}. He proved it for invariant Poisson structures on a smooth toric variety and showed that, in this case, compactness is not necessary. 

The next natural step is to study equivariant Poisson structures, or more generally equivariant bi-vector fields, on a toric variety, that is the object of our article.

\subsection*{Acknowledgements}
I thank Marco Manetti for introducing me to the problem and for some very helpful discussions and suggestions. I thank Andreas Hochenegger for reading the first version of the article.
 
\section{Toric varieties}
In this section we summarise some basic notions on toric variety, following mainly the book \cite{CLS}. We also recall the classical references \cite{Danilov, Fulton, Oda}. 
\subsection{Notations}
Let $\CC^*$ be the multiplicative group of non zero elements of $\CC$. A \emph{complex algebraic torus} (or simply a \emph{torus}) is an affine variety $T$ isomorphic to the group $(\CC^*)^n$ for some $n>0$.
There are two \emph{lattices}, i.e. free abelian groups, of rank $n$ associated to a torus $T= \Spec \CC[t_1^{\pm 1}, \ldots, t_n^{\pm 1}]$: 
\begin{itemize}
\item the lattice of characters of  $T$:
\begin{eqnarray*}
M &=& \left\{\chi: T \to \CC^* \mid \chi \ \mbox{group homomorphism} \right\}\\ & =& \left\{\chi^m:T \to \CC^*, \chi^m(t_1, \ldots , t_n)=t_1^{m_1}\cdots t_n^{m_n} \mid m=(m_1,\ldots, m_n) \in \ZZ^n\right\};\end{eqnarray*}
\item the lattice of $1$-parameter subgroups of $T$:
\begin{eqnarray*}
N&=& \left\{\lambda: \CC^* \to T \mid \lambda \ \mbox{group homomorphism} \right\} \\
&=& \left\{ \lambda_u : \CC^* \to T, \lambda_u(t)=( t^{u_1}, \ldots, t^{u_n}) \mid u=(u_1, \ldots, u_n) \in \ZZ^n   \right\}.\end{eqnarray*}
\end{itemize} 
There is a natural pairing  $\langle -, - \rangle: M \times N \to \ZZ$, that associates to every $(m,u)$ the integer associated to the character $ \chi^m \circ \lambda_u$. It follows that $M$ is the dual lattice of $N$ and viceversa. Moreover, there is a canonical isomorphism $N\otimes_\ZZ \CC \cong T$, given by $u\otimes t \mapsto \lambda^u(t)$, and the notation $T_N$  is used to indicate the torus associated to the lattice $N$ via this isomorphism.
To fix notation: let $N_\RR:=N\otimes_\ZZ \RR$ and $M_\RR:= M \otimes_\ZZ \RR$.

\subsection{Toric variety associated to a fan}
\begin{definition}
A \emph{toric variety} is an irreducible variety $X$ containing a torus $T\cong (\CC^*)^n$ as a dense open subset, such that the action of $T$ on itself extends to an action on $X$. 
\end{definition}
Given a cone, one can define an affine toric variety in the following way.
\begin{definition}
A \emph{convex polyhedral cone} in $N_\RR$ is a set of the form
\[ \sigma =\mbox{Cone}(S) := \left\{ \sum_{s\in S} \lambda_s s \mid \lambda_s\geq 0 \right\} \subset N_\RR\]
where $S\subset N_\RR$ is a set of \emph{generators} of $\sigma$. 
\end{definition}
Note that a convex polyhedral cone is convex and it is a cone. In the following we refer to such cones simply with the name \emph{polyhedral cones}.

Given a polyhedral cone $\sigma$, its \emph{dual cone} is
\[\sigma^\vee :=\{ m \in M_\RR \mid \langle m,u \rangle\geq 0, \ \mbox{for all} \ u \in \sigma \},  \]
\[ \mbox{while} \quad \sigma^\perp :=\{ m \in M_\RR \mid \langle m,u \rangle=0, \ \mbox{for all} \ u \in \sigma \}, \]
that are both polyhedral cones.
A \emph{face} of a polyhedral cone $\sigma$ is the intersection of the cone with an hyperplane $H_m=\{u \in N_\RR \mid \langle m, u \rangle =0 \}$ for some $m \in \sigma^\vee$. One writes $\tau \leq \sigma$ to indicate that $\tau$ is a face of $\sigma$. A face of dimension one is also called an \emph{edge}.
 
\begin{definition}
Let $\sigma$ be a polyhedral cone. It is called
\begin{itemize}
\item \emph{strongly convex} if the origin is a face of $\sigma$;
\item \emph{rational} if $\sigma =\mbox{Cone}(S)$, for some finite set  $S\subset N_\RR$;
\end{itemize}\end{definition}
A strongly convex rational polyhedral cone $\sigma$ has a canonical set of generators constructed as follow. Let $\rho$ be an edge of $\sigma$. Since $\sigma$ is strongly convex, $\rho$ is a half line, and since $\sigma$ is rational, the semigroup $\rho \cap N$ is generated by a unique element $u_\rho$, called a \emph{ray generator}. The ray generators of the edges of $\sigma$ are called  the \emph{minimal generators}.

\begin{definition}
A rational polyhedral cone $\sigma$ is called \emph{smooth} if the set of its minimal generators is a part of a $\ZZ$-basis of $N$. 
\end{definition}

The following theorem defines the affine toric variety associated to a cone.

\begin{theorem}
Let $\sigma\subset N_\RR$ be a rational polyhedral cone.
The lattice points $S_\sigma := \sigma^\vee \cap M$ form a finitely generated semigroup. 
Let $\CC[S_\sigma]$ be the group algebra associated to $S_\sigma$.
Then
\[ Y_\sigma  := \Spec\left( \CC[S_\sigma]\right) \]
is an affine toric variety. 
Moreover, 
\begin{itemize}
\item $\dim Y_\sigma = \rk_\ZZ N \Leftrightarrow  \sigma$ is strongly convex, 
\item $Y_\sigma$ is normal $\Leftrightarrow \sigma$ is strongly convex and rational,  
\item $Y_\sigma$ is smooth $\Leftrightarrow$ $\sigma$ is smooth.
\end{itemize}
\end{theorem}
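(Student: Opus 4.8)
The plan is to establish the finite generation first, and then to treat the three equivalences in turn, isolating the hard direction of the smoothness criterion as the main obstacle.

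First I would prove that $S_\sigma = \sigma^\vee \cap M$ is a finitely generated semigroup (Gordan's Lemma). Since $\sigma$ is rational polyhedral, its dual $\sigma^\vee$ is again a rational polyhedral cone, so $\sigma^\vee = \Cone(T)$ for a finite set $T \subset M$. The key observation is that the fundamental parallelotope $K := \{\sum_{t \in T} \lambda_t t \mid 0 \le \lambda_t \le 1\}$ is compact, whence $K \cap M$ is finite; writing any $m \in S_\sigma$ as $m = \sum_{t} \lambda_t t$ and splitting each $\lambda_t$ into integer and fractional parts shows that $K \cap M$ generates $S_\sigma$ as a semigroup. Finite generation then gives that $\CC[S_\sigma]$ is a finitely generated $\CC$-algebra and, being a subalgebra of the Laurent ring $\CC[M]$, an integral domain; hence $Y_\sigma = \Spec(\CC[S_\sigma])$ is an irreducible affine variety. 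Finally the inclusion $\CC[S_\sigma] \hookrightarrow \CC[\ZZ S_\sigma]$ exhibits the torus $\Spec(\CC[\ZZ S_\sigma])$ as a dense open subset on which the translation action extends to $Y_\sigma$, so that $Y_\sigma$ is an affine toric variety.

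For the dimension statement I would use that $\dim Y_\sigma$ equals the rank of the group $\ZZ S_\sigma$ generated by the semigroup, which equals $\dim_\RR \Span_\RR(\sigma^\vee)$. The point is then the linear-algebra identity relating $\sigma$ and $\sigma^\vee$: the largest linear subspace contained in $\sigma$ is $\sigma \cap (-\sigma)$, and $\dim(\sigma \cap (-\sigma)) = n - \dim \sigma^\vee$. Thus $\sigma^\vee$ is full-dimensional, equivalently $\dim Y_\sigma = n = \rk_\ZZ N$, precisely when $\sigma \cap (-\sigma) = \{0\}$, i.e. when $\sigma$ is strongly convex. For normality I would reduce to checking that $\CC[S_\sigma]$ is integrally closed, which holds iff the semigroup $S_\sigma$ is saturated in $M$; saturation is immediate from the cone description, since $km \in \sigma^\vee$ with $m \in M$ and $k \in \ZZ_{>0}$ forces $m \in \sigma^\vee$ as cones are stable under positive scaling. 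To upgrade saturation to normality I would write $\sigma^\vee$ as the intersection of the half-spaces $H^+_{u_i} = \{m \mid \langle m, u_i \rangle \ge 0\}$ over the finitely many ray generators $u_i$ of $\sigma$, giving $\CC[S_\sigma] = \bigcap_i \CC[H^+_{u_i} \cap M]$; each factor is, after a change of basis, of the form $\CC[x, y_1^{\pm 1}, \ldots, y_{n-1}^{\pm 1}]$, manifestly normal, and a finite intersection of normal domains sharing the same fraction field is normal.

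The smoothness criterion is where the real work lies. The direction ``$\sigma$ smooth $\Rightarrow Y_\sigma$ smooth'' is a direct computation: if the minimal generators $u_1, \ldots, u_k$ of $\sigma$ extend to a $\ZZ$-basis $u_1, \ldots, u_n$ of $N$ with dual basis $m_1, \ldots, m_n$ of $M$, then $S_\sigma = \NN m_1 \oplus \cdots \oplus \NN m_k \oplus \ZZ m_{k+1} \oplus \cdots \oplus \ZZ m_n$, so $\CC[S_\sigma] \cong \CC[x_1, \ldots, x_k, x_{k+1}^{\pm 1}, \ldots, x_n^{\pm 1}]$ and $Y_\sigma \cong \CC^k \times (\CC^*)^{n-k}$ is smooth. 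The converse is the hard part and the step I expect to be the main obstacle. After reducing to $\sigma$ strongly convex and full-dimensional, so that $Y_\sigma$ has a unique torus-fixed point $x_0$ corresponding to the semigroup map sending every nonzero element of $S_\sigma$ to $0$, I would compute the Zariski cotangent space at $x_0$, whose dimension equals the cardinality of the unique minimal generating set (Hilbert basis) of $S_\sigma$. Smoothness forces this number to equal $\dim Y_\sigma = n$, and the crux is then to show that a full-dimensional saturated semigroup minimally generated by exactly $n$ elements must have those generators form a $\ZZ$-basis of $M$; dualizing recovers that the ray generators of $\sigma$ are part of a basis of $N$, i.e. $\sigma$ is smooth. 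Controlling the Hilbert basis and extracting the basis property from the numerical coincidence $\dim T_{x_0} = n$ is the delicate point of the whole argument.
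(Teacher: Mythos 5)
The paper never proves this theorem: it appears in the preliminaries as recalled background from the standard references (\cite{CLS}, Fulton, Oda), so there is no proof of the paper's own to compare yours against. Your outline is essentially the textbook argument of \cite{CLS}: Gordan's lemma via the lattice points of the fundamental parallelotope, irreducibility from $\CC[S_\sigma]\subset\CC[M]$ being a finitely generated domain, dimension via $\rank \ZZ S_\sigma = \dim_\RR \Span_\RR(\sigma^\vee) = n - \dim\bigl(\sigma\cap(-\sigma)\bigr)$, normality by writing $\CC[S_\sigma]$ as the finite intersection of the rings $\CC[H^+_{u_i}\cap M]\cong \CC[x,y_1^{\pm1},\ldots,y_{n-1}^{\pm1}]$, and smoothness via the Zariski cotangent space at the distinguished point, whose dimension is the cardinality of the Hilbert basis of $S_\sigma$. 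The crux you isolate in the last step is correct and closes as you expect: saturation together with full-dimensionality of $\sigma^\vee$ give $\ZZ S_\sigma = M$, so $n$ Hilbert basis elements generating $S_\sigma$ generate $M$ as a group, hence form a $\ZZ$-basis; dualizing then exhibits $\sigma$ as generated by part of a $\ZZ$-basis of $N$.

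Two caveats, both minor. First, for normality you only establish the implication (strongly convex and rational) $\Rightarrow$ normal; the converse, as literally stated in the paper, is actually false --- if $\sigma$ is a rational linear subspace of positive dimension, $Y_\sigma$ is a torus of dimension $n-\dim\sigma$, which is normal (indeed smooth) although $\sigma$ is not strongly convex. This is an imprecision in the statement as quoted (in \cite{CLS} strong convexity is a standing hypothesis for these equivalences), not a flaw in your argument, but you should flag that the biconditional needs that reading. Second, saturation of $S_\sigma$ and the normality criterion for $\CC[S_\sigma]$ should be phrased relative to the group $\ZZ S_\sigma$ rather than $M$; the two differ exactly in the same degenerate, non-strongly-convex cases, so nothing breaks in the situation where the equivalences genuinely hold.
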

An important fact is that all affine toric varieties can be obtained in this way.
\begin{example}
The $n$-dimensional torus $\left(\CC^*\right)^n\subset Y_\sigma$ is the subvariety associated to the $0$-dimensional cone $\{0\}$. Indeed, $\left(\CC^*\right)^n= \Spec \CC[\chi^{\pm e^*_1}, \ldots, \chi^{\pm e^*_n}] = \Spec\CC[z_1^{\pm 1}, \ldots z_n^{\pm 1}]$, where $\EE:=\{e_1, \ldots, e_n\}$ is the standard basis of $\ZZ^n$.

\end{example}

We will now associate a toric variety to a fan.
 
\begin{definition}
A \emph{fan} $\Sigma:=\{ \sigma_i\}_{i\in I}$ is a finite collection of strongly convex rational polyhedral cones  
in $N_\RR$ such that
\begin{itemize}
\item every face of a cone $\sigma \in \Sigma$ is a cone of $\Sigma$,
\item for every $\sigma_1, \sigma_2 \in \Sigma$ the intersection $\sigma_1 \cap \sigma_2$ is a common face of $\sigma_1$ and $\sigma_2$. 
\end{itemize}
\end{definition}

\begin{definition}
A fan $\sigma\subset N_\RR$ is called 
\begin{itemize}
\item \emph{smooth} if every cone of $\Sigma$ is smooth;
\item \emph{complete} if its support $|\Sigma| =\bigcup_{\sigma \in \Sigma} \sigma$ is all $N_\RR$. 
\end{itemize}
\end{definition}

As already seen, every cone $\sigma$ defines an affine toric variety. Moreover, it turns out that the cones in the fan give the combinatorial data necessary to glue a collection of affine toric varieties together to yield an abstract toric variety denoted by $X:=TV(\Sigma)$. The main result is the following
\begin{theorem}
Let $\Sigma$ be a fan in $N_\RR$. The variety $X=TV(\Sigma)$ is a normal separated toric variety. Moreover, 
\begin{itemize}
\item $X$ is smooth $\Leftrightarrow$ $\Sigma$ is smooth,
\item $X$ is compact in the classical topology $\Leftrightarrow$ $\Sigma$ is complete. 
\end{itemize}
\end{theorem}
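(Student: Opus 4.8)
The plan is to construct $X = TV(\Sigma)$ explicitly by gluing the affine charts $Y_\sigma$ and then to verify each asserted property, handling the purely local statements first and reserving the compactness criterion—the only genuinely global assertion—for last. For the gluing, note that for any two cones $\sigma, \sigma' \in \Sigma$ the intersection $\tau = \sigma \cap \sigma'$ is, by the fan axioms, a common face of both. A face $\tau \leq \sigma$ is cut out by some $m \in \sigma^\vee \cap M$ as $\tau = \sigma \cap H_m$, and one checks that $S_\tau = S_\sigma + \ZZ_{\geq 0}(-m)$, so that $\CC[S_\tau] = \CC[S_\sigma]_{\chi^m}$ is the localization of $\CC[S_\sigma]$ at the single element $\chi^m$. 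This realizes $Y_\tau$ as a principal open subset of $Y_\sigma$, and symmetrically of $Y_{\sigma'}$. I would glue $Y_\sigma$ and $Y_{\sigma'}$ along these canonically identified copies of $Y_\tau$, the identification being the identity on the common torus $T_N$. The cocycle condition on triple overlaps is then automatic, since every gluing isomorphism is a restriction of the identity on $T_N \subset Y_\sigma$. The resulting $X$ is a toric variety: $T_N$ sits inside it as a dense open orbit whose action extends.

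Normality and smoothness are local and follow immediately from the previous theorem. Since each chart $Y_\sigma$ is normal (as $\sigma$ is a strongly convex rational cone) and normality is local on an open cover, $X$ is normal. Likewise, $X$ is smooth if and only if each chart $Y_\sigma$ is smooth, which by the previous theorem holds if and only if each $\sigma$ is smooth, i.e. if and only if $\Sigma$ is smooth by definition.

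Separatedness is the first place the fan axioms do substantive work. Covering $X$ by the affines $Y_\sigma$, separatedness reduces to showing that for all $\sigma, \sigma'$ the overlap $Y_{\sigma \cap \sigma'}$ is affine and the multiplication map $\CC[S_\sigma] \otimes_\CC \CC[S_{\sigma'}] \to \CC[S_{\sigma \cap \sigma'}]$ is surjective, equivalently $S_{\sigma \cap \sigma'} = S_\sigma + S_{\sigma'}$. This follows from the separation lemma $(\sigma \cap \sigma')^\vee = \sigma^\vee + (\sigma')^\vee$, valid precisely because $\sigma \cap \sigma'$ is a common face; I would prove it by a supporting-hyperplane argument separating the two cones and then intersect with $M$.

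Finally, the compactness criterion, which I expect to be the main obstacle since it is the only assertion that is not combinatorially formal or local. I would use the valuative criterion of properness over $\Spec \CC$ together with the behaviour of one-parameter subgroups: for $u \in N$ the limit $\lim_{t \to 0} \lambda_u(t)$ exists in the chart $Y_\sigma$ exactly when $u \in \sigma$. If $\Sigma$ is complete, then $|\Sigma| = N_\RR$, so every ray lands in some cone; promoting this to the full valuative criterion—reducing an arbitrary $\Spec \CC((t))$-point, up to the torus action, to a one-parameter subgroup—shows $X$ is proper, hence compact in the classical topology. Conversely, if $\Sigma$ is not complete, I would pick $u \in N$ with $u \notin |\Sigma|$; then $\lambda_u(\CC^*)$ is a curve in $X$ whose limit as $t \to 0$ exists in no chart $Y_\sigma$, so the curve has no limit point and $X$ cannot be compact. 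The delicate step is the reduction of the general valuative criterion to the one-parameter-subgroup computation, and this is where I would follow the detailed argument in \cite{CLS}.
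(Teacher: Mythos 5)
The paper states this theorem in its preliminaries without proof, citing \cite{CLS}, and your sketch is exactly the standard argument of that reference (gluing along common faces via $\CC[S_\tau]=\CC[S_\sigma]_{\chi^m}$, the separation lemma for separatedness, and one-parameter subgroups plus the valuative criterion for compactness), so the proposal is essentially correct and there is no in-paper proof to diverge from. Two small repairs: the identity $S_{\sigma\cap\sigma'}=S_\sigma+S_{\sigma'}$ does not follow by merely intersecting the cone identity $(\sigma\cap\sigma')^\vee=\sigma^\vee+(\sigma')^\vee$ with $M$ (a lattice point of a sum of rational cones need not be a sum of lattice points of the summands), but it does follow from your own localization fact, since $S_{\sigma\cap\sigma'}=S_\sigma+\ZZ_{\geq 0}(-m)$ with $-m\in S_{\sigma'}$ for a separating $m\in\sigma^\vee\cap(-\sigma')^\vee$; and in the non-compactness direction you need the slightly stronger fact that $\lambda_u(t_k)$, $t_k\to 0$, has no limit point (not merely no limit), which the same computation gives: a limit point $x\in Y_\sigma$ would force $t_k^{\langle m,u\rangle}$ to converge in $\CC$ for every $m\in S_\sigma$, hence $\langle m,u\rangle\geq 0$ for all such $m$, so $u\in(\sigma^\vee)^\vee=\sigma\subset|\Sigma|$, contradicting the choice of $u$.
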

An important result is
\begin{theorem}
Let $X$ be a normal separated toric variety with torus $T_N$. Then there exists a fan $\Sigma \subset N_{\RR}$ such that $X\cong TV(\Sigma)$.
\end{theorem}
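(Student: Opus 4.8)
The plan is to reduce this global statement to the affine dictionary between cones and affine toric varieties recalled above, and then to read off the combinatorics of how the charts overlap. The three stages are: cover $X$ by invariant affine charts, attach a strongly convex rational polyhedral cone to each chart, and verify that these cones together with their faces assemble into a fan whose associated toric variety is $X$.

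\emph{Invariant affine cover.} First I would invoke Sumihiro's theorem: a normal variety equipped with a torus action admits a cover by invariant affine open subsets. Applied to the $T_N$-action on $X$, it yields finitely many $T_N$-stable affine opens $U_1,\dots,U_r$ covering $X$. Since $T_N\subset X$ is a dense orbit, every nonempty $T_N$-invariant open must contain $T_N$; hence each $U_i$ is a normal affine toric variety with the same torus $T_N$.

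\emph{From charts to cones.} For each $U_i$, let $S_i\subset M$ be the set of characters $\chi^m$ of $T_N$ that extend to regular functions on $U_i$, so that $U_i=\Spec\CC[S_i]$. By Gordan's lemma $S_i$ is a finitely generated semigroup, and it generates $M$ as a group because $T_N$ is dense in $U_i$. Normality of $U_i$ translates into $S_i$ being saturated, which I would use to show that $\sigma_i^\vee:=\Cone(S_i)\subset M_\RR$ is a rational polyhedral cone with $\sigma_i^\vee\cap M=S_i$; its dual $\sigma_i:=(\sigma_i^\vee)^\vee\subset N_\RR$ is then a rational polyhedral cone recovering $U_i\cong Y_{\sigma_i}$. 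The density of $T_N$ forces $S_i$ to span $M_\RR$, which is precisely the condition making $\sigma_i$ strongly convex, and each face $\tau\leq\sigma_i$ corresponds to a principal invariant affine open $Y_\tau\subset Y_{\sigma_i}$ that is again open in $X$.

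\emph{Assembling the fan.} Let $\Sigma$ be the collection of all faces of all the $\sigma_i$; it is finite and closed under passing to faces, so only the intersection axiom remains. Here separatedness of $X$ enters: the intersection $U_i\cap U_j$ of two affine opens in a separated scheme is again affine and $T_N$-invariant, hence equals $Y_\tau$ for some cone $\tau$, and comparing rings of characters gives $\tau=\sigma_i\cap\sigma_j$ (using $(\sigma_i^\vee+\sigma_j^\vee)^\vee=\sigma_i\cap\sigma_j$). Separatedness makes the diagonal closed, forcing the multiplication map $\CC[S_i]\otimes_\CC\CC[S_j]\to\CC[S_\tau]$ to be surjective, i.e. $S_\tau=S_i+S_j$; by the separation lemma this is equivalent to $\sigma_i\cap\sigma_j$ being a common face of $\sigma_i$ and $\sigma_j$. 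Thus $\Sigma$ is a fan, the charts $Y_{\sigma_i}$ glue along $Y_{\sigma_i\cap\sigma_j}$ exactly as in the construction of $TV(\Sigma)$, and one obtains a $T_N$-equivariant isomorphism $X\cong TV(\Sigma)$. I expect the two genuinely nontrivial points to be the external input of Sumihiro's theorem, which is what guarantees the invariant affine cover in the first place, and the separation lemma identifying separatedness with the common-face condition; the affine classification and the semigroup bookkeeping are then routine consequences of Gordan's lemma and saturation.
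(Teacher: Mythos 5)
The paper itself gives no proof of this statement: it is recalled as a classical theorem, with \cite{CLS,Danilov,Fulton,Oda} as the implicit sources, so the only meaningful comparison is with the standard literature proof (Theorem 3.1.7 and Corollary 3.1.8 of \cite{CLS}). Your proposal follows exactly that route, and its first two stages are essentially correct: Sumihiro's theorem supplies the invariant affine cover, every nonempty invariant open contains the dense torus, and the dictionary between normal affine toric varieties and strongly convex rational polyhedral cones identifies each chart as some $Y_{\sigma_i}$. (One small misattribution: the finite generation of $S_i$ follows from $U_i$ being of finite type over $\CC$; Gordan's lemma is the reverse implication, that $\sigma^\vee\cap M$ is a finitely generated semigroup for a rational cone $\sigma$.)

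The gap is in the last stage. You claim that the identity $S_\tau=S_i+S_j$, where $U_i\cap U_j=Y_\tau$ and $\tau=\sigma_i\cap\sigma_j$, is ``by the separation lemma \ldots\ equivalent to $\sigma_i\cap\sigma_j$ being a common face''. Only the implication (common face) $\Rightarrow$ ($S_\tau=S_i+S_j$) holds; the converse is false, so as written your argument does not establish the second fan axiom. Concretely, in $N=\ZZ^2$ take $\sigma_1=\Cone(e_1,e_2)$ and $\sigma_2=\Cone(e_1+e_2,e_1-e_2)$: their intersection $\tau=\Cone(e_1,e_1+e_2)$ is a face of neither (their interiors overlap), and yet $S_{\sigma_1}+S_{\sigma_2}=S_\tau$, because $\tau^\vee=\Cone(e_2^*,e_1^*-e_2^*)$ and the generators $e_2^*\in S_{\sigma_1}$, $e_1^*-e_2^*\in S_{\sigma_2}$ form a $\ZZ$-basis of $M$. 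What rescues the proof is the piece of information you derive but never use: $Y_\tau=U_i\cap U_j$ sits inside $Y_{\sigma_i}$ as a torus-invariant \emph{open} subvariety (in the example above the natural map $Y_\tau\to Y_{\sigma_1}$, $(u,v)\mapsto(uv,u)$, is not an open immersion, which is why that example cannot arise from a separated gluing). From openness one deduces face-ness by a genuine argument, e.g. via the limit criterion $\sigma\cap N=\{u\in N\mid \lim_{t\to 0}\lambda_u(t) \mbox{ exists in } Y_\sigma\}$ and the orbit-cone correspondence: the distinguished point of $\tau$ lies in $U_i\cap U_j\subset Y_{\sigma_i}$, its orbit is therefore an orbit $O(\gamma)$ for some face $\gamma\leq\sigma_i$, and one checks $\tau=\gamma$. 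This step is the actual content of Theorem 3.1.7 in \cite{CLS}; it is not a formal consequence of the separation lemma, so your proof has a genuine hole precisely where separatedness has to be converted into the common-face condition.
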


For future use, we prove the following
\begin{lemma}\label{lemma.cono}
Let $\sigma\subset N_\RR$ be a strongly convex polyhedral cone.
Consider the hyperplane 
\[ H_0 =\{ (z_1, \ldots, z_n) \mid a_1z_1 +\ldots +a_n z_n=0\} \subset N_\RR, \quad \mbox{with } a_i \in \RR\]
and the semispaces 
\[ H_0^{+} =\{ (z_1, \ldots, z_n) \mid a_1z_1 +\ldots +a_n z_n\geq 0\} \mbox{ and } H_0^{-} =\{ (z_1, \ldots, z_n) \mid a_1z_1 +\ldots +a_n z_n\leq 0\}. \]
If $\sigma \cap H_0^-\neq \{0\}$, there exists at least one generating ray of $\sigma$ contained in $H_0^-$. 
The same holds for $H_0^+$.  
\end{lemma}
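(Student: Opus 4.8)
The plan is to reduce everything to the structural fact that a pointed (i.e.\ strongly convex) polyhedral cone is the nonnegative hull of its edges, and then simply to evaluate the linear form defining $H_0$ on the ray generators. First I would fix notation: let $\varphi\colon N_\RR \to \RR$ be the linear functional $\varphi(z_1,\ldots,z_n)=a_1z_1+\cdots+a_nz_n$, so that $H_0=\ker\varphi$, $H_0^-=\{\varphi\leq 0\}$ and $H_0^+=\{\varphi\geq 0\}$. The key elementary observation is that a ray $\rho=\RR_{\geq 0}\,u$ is contained in $H_0^-$ if and only if $\varphi(u)\leq 0$, since $\varphi$ is linear and hence $\varphi(tu)=t\,\varphi(u)$ for all $t\geq 0$.

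Next I would invoke the description of $\sigma$ by its generators. Since $\sigma$ is strongly convex it is pointed, so it coincides with the nonnegative span of its ray generators $u_1,\ldots,u_r$, the minimal generators attached to its edges. This is exactly the structural statement recalled earlier for strongly convex cones, and it is the only nontrivial input; I regard it as the main (though standard) obstacle, the rest of the argument being formal. Observe also that the hypothesis $\sigma\cap H_0^-\neq\{0\}$ already forces $\sigma\neq\{0\}$, hence $r\geq 1$, so the statement is not vacuous.

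The core of the proof is then a one-line contradiction. Suppose that no edge of $\sigma$ lies in $H_0^-$, that is $\varphi(u_i)>0$ for every $i$. Then for any $x=\sum_{i=1}^r \lambda_i u_i\in\sigma$ with $\lambda_i\geq 0$ not all zero, linearity gives $\varphi(x)=\sum_{i=1}^r \lambda_i\,\varphi(u_i)>0$, so $x\notin H_0^-$; this would mean $\sigma\cap H_0^-=\{0\}$, contradicting the hypothesis. Hence $\varphi(u_i)\leq 0$ for at least one index $i$, and by the observation above the corresponding edge $\rho_i=\RR_{\geq 0}\,u_i$ is contained in $H_0^-$, as required. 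Finally, the claim for $H_0^+$ follows by applying the same argument to the functional $-\varphi$, which interchanges the roles of $H_0^+$ and $H_0^-$.
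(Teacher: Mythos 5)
Your proof is correct and follows essentially the same route as the paper's: write a nonzero point of $\sigma\cap H_0^-$ as a nonnegative combination of the ray generators, evaluate the defining linear functional, and derive a contradiction. If anything, your version is slightly more careful, since you take the correct strict negation $\varphi(u_i)>0$ for all $i$ (noting that not all $\lambda_i$ vanish), whereas the paper only concludes $\varphi(p)\geq 0$, which by itself would not contradict the hypothesis in the boundary case $p\in H_0$.
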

\begin{proof} 
Let $0\neq p \in H_0^-\cap \sigma$, thus 
\[  p = \sum_{s \in S} \lambda_s \rho_s, \quad \mbox{where } \{\rho_s \}_{s\in S} \ \mbox{is a set of generating rays and} \  \lambda_s \geq 0.\] 
If $\rho_s \subset H_0^+$ for all $s\in S$, then $p \in H_0^+$ and this contradicts the assumptions.
Indeed, let $H_0=\{ (z_1, \ldots, z_n) \mid a_1z_1 +\ldots +a_n z_n=0\}$ and let $\rho_s = (r_{1s}, \ldots, r_{ns})$, then
\[ p =   \sum_{s \in S} \lambda_s \rho_s  =  \sum_{s \in S} \lambda_s  (r_{1s}, \ldots, r_{ns})=\left( \sum_{s \in S} \lambda_s r_{1s}, \ldots,  \sum_{s \in S} \lambda_s r_{ns} \right) \quad \mbox{and}\]
\[ a_1  \sum_{s \in S} \lambda_s r_{1s} +\ldots + a_n  \sum_{s \in S} \lambda_s r_{ns} =  \sum_{s \in S} \lambda_s (a_1r_{1s} +\ldots + a_n r_{ns}) \geq 0. \] 
\end{proof}

\subsection{Orbit-cone correspondence} \label{sec.orbit-cone}
Let $N$ be a $n$-dimensional lattice and $M$ its dual lattice.
Let $X=TV(\Sigma)$ be the toric variety associated to a fan $\Sigma\subset N_\RR$. Let $\sigma \in \Sigma$ be a cone and $Y_\sigma$  be the affine toric subvariety associated to it. 

Recall that, the torus $T_N =(\CC^*)^n$ acts on $Y_\sigma$ as follows. 
Let $\{m_1, \ldots, m_s\}\subset M\cong \ZZ^n$ be the characters that generate the semigroup $S_\sigma$,  $\underline{t}=(t_1, \ldots, t_n)\in (\CC^*)^n$ and  $\underline{y}=(y_1, \ldots , y_s) \in Y_\sigma$, then
\[ (\CC^*)^n \times Y_\sigma \to Y_\sigma, \quad \underline{t}\cdot \underline{y 
}:= \left(\chi^{m_1}(\underline{t})\cdot y_1, \ldots , \chi^{m_s}(\underline{t})\cdot y_s\right). \] 
To determine the orbits for this action, it is convenient to use the following intrinsic description of points of $Y_\sigma$.
Each point  $p\in Y_\sigma$ can be seen as  a semigroups homomorphisms $\gamma_p: S_\sigma \to \CC$, given by $\gamma_p(m) =\chi^m(p)$. 
The action of the torus $T_N$ on $Y_\sigma$ translates into
$ t \cdot \gamma_p (m) = \chi^m(t) \cdot \chi^m(p)$.
For every cone $\sigma\in \Sigma$, we define the \emph{distinguish point} of $\sigma$ as
the point in $Y_\sigma$ associated to the homomorphism
\[ \gamma_\sigma: S_\sigma \to \CC, \quad \mbox{given by } \gamma_\sigma(m) =: \left\{ \begin{array}{cc} 1 & \mbox{if } m \in \sigma^\perp \cap M, \\ 0 & \mbox{otherwise.} \end{array} \right. \] 
For every cone $\sigma \in \Sigma$, consider
\[ O(\sigma):= \{\gamma: S_\sigma \to \CC \mid \gamma(m)\neq 0 \mbox{ for } m \in \sigma^\perp \cap M \mbox{ and } \gamma(m)= 0  \mbox{ otherwise}\},\]
it is easy to see that it is invariant under the torus action,  $\gamma_\sigma \in O(\sigma)$ and that $O(\sigma) =T_N \cdot \gamma_\sigma$.   
The main theorem is the following
\begin{theorem}
Let $X=TV(\Sigma)$ be the toric variety associated to a fan $\Sigma$ in $N_\RR$. Then:
\begin{itemize}
\item There is a bijective correspondence 
\[\xymatrix@R=1ex{ \{\mbox{cones\  }\sigma \ \mbox{in} \ \Sigma \}  \ar@{<->}[r] & \{ T_N- \mbox{orbits in}\ \Sigma \},\\
\sigma   \ar@{<->}[r] & O(\sigma) \cong \Hom(\sigma^{\perp} \cap M, \CC^*)} \]
\item Let $n = \dim N_\RR$. For each cone $\sigma \in \Sigma$, $\dim O(\sigma) = n-\dim \sigma$.
\item The affine open subset $Y_\sigma$ is the union of orbits
\[  Y_\sigma = \bigcup_{\tau \leq \sigma} O(\tau).\]
\item $\tau \leq \sigma$ if and only if $O(\sigma) \subset \overline{O(\tau)}$ and $ \overline{O(\tau)} = \bigcup_{\tau \leq \sigma} O(\sigma)$.
\end{itemize}
\end{theorem}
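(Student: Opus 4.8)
The plan is to argue chart by chart on each affine piece $Y_\sigma$, using the intrinsic description of its points as semigroup homomorphisms $\gamma\colon S_\sigma\to\CC$ recalled above, and then to glue the charts together. Two tools do essentially all the work. The first is the combinatorial duality between faces of $\sigma$ and faces of its dual cone: the assignment $\tau\mapsto\tau^*:=\sigma^\vee\cap\tau^\perp$ is an inclusion-reversing bijection from faces of $\sigma$ onto faces of $\sigma^\vee$, with $\dim\tau+\dim\tau^*=n$. The second is the computation of limits of one-parameter subgroups $\lim_{t\to 0}\lambda_u(t)$ inside $Y_\sigma$, where $\chi^m(\lambda_u(t))=t^{\langle m,u\rangle}$, so that the limit exists iff $u\in\sigma$ and then equals the distinguished point $\gamma_\tau$ of the unique face $\tau\leq\sigma$ whose relative interior contains $u$.

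\emph{Dimension formula.} I would first prove $O(\sigma)\cong\Hom(\sigma^\perp\cap M,\CC^*)$. An element $\gamma\in O(\sigma)$ restricts to a group homomorphism $\sigma^\perp\cap M\to\CC^*$, and conversely any such homomorphism extends by $0$ to a semigroup homomorphism on all of $S_\sigma$. The only point to check is that the extension stays multiplicative, and this holds because $\sigma^\perp\cap M$ is a face of the semigroup $S_\sigma$: if $m,m'\in S_\sigma$ with $m+m'\in\sigma^\perp$, then $\langle m,u\rangle$ and $\langle m',u\rangle$ are nonnegative and sum to $0$ on every $u\in\sigma$, forcing both into $\sigma^\perp$. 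Since $\sigma^\perp\cap M$ is a saturated sublattice of rank $\dim\sigma^\perp=n-\dim\sigma$, we obtain $O(\sigma)\cong(\CC^*)^{\,n-\dim\sigma}$, which is claim (2).

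\emph{Orbit decomposition and bijection.} For claim (3), take any $p\in Y_\sigma$ with homomorphism $\gamma_p$ and look at its support $F:=\{m\in S_\sigma\mid\gamma_p(m)\neq 0\}$. As $\gamma_p$ is multiplicative and $\CC$ has no zero-divisors, $F$ is a face of the semigroup $S_\sigma$; by the face duality it equals $\tau^*\cap M=\sigma^\vee\cap\tau^\perp\cap M$ for a unique face $\tau\leq\sigma$, and this is precisely the set on which the homomorphisms of $O(\tau)$ are nonzero, so $p\in O(\tau)$. This gives $Y_\sigma\subseteq\bigcup_{\tau\leq\sigma}O(\tau)$; the reverse inclusion holds since $\gamma_\tau\in Y_\tau\subseteq Y_\sigma$ and $O(\tau)=T_N\cdot\gamma_\tau$, and the union is disjoint because distinct faces give distinct supports. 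To globalize to claim (1), I use $Y_\sigma\cap Y_{\sigma'}=Y_{\sigma\cap\sigma'}$ with $\sigma\cap\sigma'$ a common face, so that $X=\bigcup_{\sigma\in\Sigma}O(\sigma)$ and every orbit is some $O(\sigma)$ (surjectivity). Injectivity then follows from disjointness of supports within a single chart combined with this gluing: if $O(\sigma)=O(\sigma')$, then $\gamma_\sigma\in Y_{\sigma\cap\sigma'}$ forces $O(\sigma)=O(\rho)$ for a face $\rho\leq\sigma\cap\sigma'$, whence $\rho=\sigma$ and $\rho=\sigma'$, i.e. $\sigma=\sigma'$.

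\emph{Closure relation and main obstacle.} Finally, for claim (4) I would compute $\overline{O(\tau)}$ through one-parameter limits: $\gamma_\sigma$ lies in the closure of $O(\tau)$ exactly when it is a limit $\lim_{t\to 0}\lambda_u(t)\cdot\gamma$ of a point $\gamma\in O(\tau)$, and the limit computation shows this occurs iff $\tau\leq\sigma$, giving the equivalence $\tau\leq\sigma\Leftrightarrow O(\sigma)\subseteq\overline{O(\tau)}$. Identifying $\overline{O(\tau)}$ with the toric variety of the star of $\tau$ (the fan in $N_\RR/\Span(\tau)$ formed by the images of the cones $\sigma$ with $\tau\leq\sigma$), whose dense orbit is $O(\tau)$ itself, then yields $\overline{O(\tau)}=\bigcup_{\tau\leq\sigma}O(\sigma)$. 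I expect claim (4) to be the main obstacle: recognizing the orbit closure as a toric variety in its own right, and matching the star construction with the face relation, is more delicate than the chartwise arguments; the face-duality lemma, although routine, is the technical linchpin underlying all four parts.
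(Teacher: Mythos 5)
This statement is one the paper does not prove at all: it appears in the background section on the orbit--cone correspondence and is quoted as a classical result from the references (it is Theorem 3.2.6 of \cite{CLS}). So there is no proof in the paper to compare against; your proposal must be judged against the standard argument, and in fact it \emph{is} the standard argument of \cite{CLS}: the identification $O(\sigma)\cong\Hom(\sigma^\perp\cap M,\CC^*)$ by restriction and extension-by-zero, the support-is-a-face argument for the decomposition of $Y_\sigma$, gluing of charts via $Y_\sigma\cap Y_{\sigma'}=Y_{\sigma\cap\sigma'}$, and one-parameter limits $\lim_{t\to 0}\lambda_u(t)\cdot\gamma$ for the closure relation. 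The outline is correct, and your treatment of injectivity of the correspondence (passing through the common face $\sigma\cap\sigma'$) is exactly right.

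Two steps are glossed and would need to be written out. First, in the decomposition of $Y_\sigma$ you pass from ``$F$ is a face of the semigroup $S_\sigma$'' to ``$F=\sigma^\vee\cap\tau^\perp\cap M$ for a unique face $\tau\leq\sigma$''; the duality you state is between faces of the \emph{cones} $\sigma$ and $\sigma^\vee$, so you are implicitly using the additional (routine but genuine) lemma that every semigroup-theoretic face of $S_\sigma$ consists of the lattice points of a cone-theoretic face of $\sigma^\vee$. Second, in part (4) your ``exactly when'' conceals the converse implication: the limit computation gives $\tau\leq\sigma\Rightarrow\gamma_\sigma\in\overline{O(\tau)}$, but for the other direction one should argue that if $\gamma_\sigma\in\overline{O(\tau)}$ then the open invariant set $Y_\sigma$ meets the orbit $O(\tau)$, hence contains it, hence $\tau\leq\sigma$ by the chartwise decomposition already proved. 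Finally, the obstacle you single out is not actually one: to get $\overline{O(\tau)}=\bigcup_{\tau\leq\sigma}O(\sigma)$ you do not need to recognize $\overline{O(\tau)}$ as the toric variety of the star of $\tau$; it suffices that the closure is $T_N$-invariant (hence a union of orbits), that every orbit is some $O(\sigma)$ by part (1), and the containment criterion just established. The star construction is only needed for the stronger statement--not asserted in the theorem--that the orbit closure is itself a toric variety.
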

To conclude the section, let's write explicitly the orbit $O(\tau)$ in $Y_\sigma$ for a face $\tau$ of a smooth cone  $\sigma$. Since $\sigma$ is smooth, we can assume it is generated by the standard basis $\{ e_1, \ldots e_n\}$ of $\ZZ^n$. Take $\tau$ to be the face generated by $\{ e_1, \ldots, e_h\}$, with $ 1 \leq h\leq n$. 
By definition, the distinguish point $\gamma_\tau$ is the semigroup homomorphism
\[ \gamma_\tau:S_\sigma \to \CC, \quad \gamma_\tau(e_i^*) =: \left\{ \begin{array}{cl} 0 & \mbox{for \ } 1\leq i \leq h, \\ 1 & \mbox{for \ } h+1 \leq i \leq n. \end{array} \right.  \]
Thus, the orbit is
\[ O(\tau) = T_N \cdot \gamma_\tau = \{(z_1, \ldots, z_n) \in Y_\sigma \mid z_i =0 \mbox{ for } 1\leq i \leq h \mbox{ and }   z_i \neq 0 \mbox{ for } h+1\leq i \leq n \}. \]

\section{Degeneracy loci of equivariant bi-vector fields} \label{sec.Degeneracy for equivariant}
\subsection{Equivariant bi-vector fields} 
Fix a $n$-dimensional lattice $N$, its dual lattice $M$ and a smooth  fan $\Sigma\subset N_\RR$.
Let $X=TV(\Sigma)$ be the smooth toric variety associated to the fan $\Sigma$.

Our aim is to study \emph{bi-vector fields} on the toric variety $X$, i.e. $2$-forms on  $X$ with values in the holomorphic tangent bundle $\Theta_X$. 
Let $\{ z_1,\ldots ,z_n \}$ be a system of local coordinates around a point $x\in X$, then $\pi$ can be written locally as 
\[ \pi= \sum_{i,j=1}^n   \pi_{ij}(z_1, \ldots, z_n)  \frac{\partial}{\partial z_i}\wedge \frac{\partial }{\partial z_j},\] 
where $\Pi=\left(\pi_{ij}(z_1, \ldots, z_n)\right)_{ij}$ is an antisymmetric matrix of homolorphic functions defined around $x\in X$.
The \emph{rank} of the bi-vector field $\pi$ at the point $x$ is the rank of the matrix $\Pi$ when calculated in $x$, we indicate it with $\rk_x \pi$. 

In particular, we are interested in equivariant bi-vector fields. 
\begin{definition} A bi-vector field $\pi \in \Gamma(X, \bigwedge^2 \Theta_X)$ is \emph{equivariant} under the torus action if there exists a character $\underline{\alpha}=(\alpha_1, \ldots, \alpha_n) \in M$, such that 
\[\pi({\underline{t}} \cdot {\underline{x}}) = {\underline{t}}^{\underline{\alpha}} \cdot \pi({\underline{x}}), \quad \mbox{for all } {\underline{x}}\in X \mbox{ and } {\underline{t}} \in (\CC^*)^n,  \] 
where $ {\underline{t}}^{\underline{\alpha}}= t_1^{\alpha_1}\cdots t_n^{\alpha_n}$. The character $\underline{\alpha}$ is called \emph{multi-degree}.
If  $\underline{\alpha}=(1, \ldots, 1)$, then the bi-vector field is \emph{invariant}.
\end{definition}

\begin{lemma}
The rank of an equivariant bi-vector field  $\pi$ is constant on the orbit of the torus action.
\end{lemma}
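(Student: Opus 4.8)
The plan is to read the equivariance relation as a transformation law for the matrix $\Pi$ and then invoke the elementary fact that the rank of an antisymmetric matrix is unchanged both by multiplication by a nonzero scalar and by congruence with an invertible matrix. Concretely, I would fix two points on the same orbit, say $\underline{x}$ and $\underline{t}\cdot \underline{x}$ for some $\underline{t}\in(\CC^*)^n$, and compare $\rk_{\underline{x}}\pi$ with $\rk_{\underline{t}\cdot\underline{x}}\pi$. First I would make the right-hand side of the equivariance condition precise: the automorphism $\phi_{\underline{t}}\colon \underline{x}\mapsto \underline{t}\cdot\underline{x}$ of $X$ has a differential $d\phi_{\underline{t}}\colon T_{\underline{x}}X \to T_{\underline{t}\cdot\underline{x}}X$ that is a linear isomorphism, inducing an isomorphism $\bigwedge^2 d\phi_{\underline{t}}\colon \bigwedge^2 T_{\underline{x}}X \to \bigwedge^2 T_{\underline{t}\cdot\underline{x}}X$, and the symbol $\underline{t}\cdot\pi(\underline{x})$ in the definition is exactly the image of $\pi(\underline{x})$ under this map. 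Thus equivariance reads $\pi(\underline{t}\cdot\underline{x}) = \underline{t}^{\underline{\alpha}}\,\bigl(\bigwedge^2 d\phi_{\underline{t}}\bigr)\bigl(\pi(\underline{x})\bigr)$.

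Next I would pass to coordinates to see the effect on the matrix. It is cleanest to work first on the dense torus, where $\phi_{\underline{t}}$ sends $\partial/\partial z_i$ to $t_i\,\partial/\partial z_i$, so that in a common coordinate system the relation above becomes $\Pi(\underline{t}\cdot\underline{x}) = \underline{t}^{\underline{\alpha}}\, D\,\Pi(\underline{x})\,D$, with $D=\mathrm{diag}(t_1,\dots,t_n)$. Since every $t_i$ is nonzero, $D$ is invertible and $\underline{t}^{\underline{\alpha}}\neq 0$; hence $\Pi(\underline{t}\cdot\underline{x})$ and $\Pi(\underline{x})$ have the same rank, giving $\rk_{\underline{t}\cdot\underline{x}}\pi = \rk_{\underline{x}}\pi$. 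For points on the boundary orbits the same conclusion follows either by the intrinsic formulation — the rank of a bi-vector is the rank of the induced map $T^*X\to TX$, which is invariant under the isomorphism $\bigwedge^2 d\phi_{\underline{t}}$ and under the nonzero scalar $\underline{t}^{\underline{\alpha}}$ — or by choosing suitable charts on the relevant $Y_\sigma$ and repeating the congruence argument.

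The computation is routine, so there is no real obstacle; the only point requiring care is the bookkeeping that identifies $\underline{t}\cdot\pi(\underline{x})$ with the push-forward $\bigl(\bigwedge^2 d\phi_{\underline{t}}\bigr)(\pi(\underline{x}))$ and the verification that this push-forward is realised by the congruence $\Pi \mapsto D\,\Pi\,D$, after which invariance of the rank under congruence by an invertible matrix and under a nonzero rescaling is immediate.
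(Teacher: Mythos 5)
Your proof is correct and is essentially the paper's argument made explicit: the paper dismisses the lemma as obvious from equivariance, and what you have written out --- identifying $\underline{t}\cdot\pi(\underline{x})$ with $\underline{t}^{\underline{\alpha}}\bigl(\bigwedge^2 d\phi_{\underline{t}}\bigr)(\pi(\underline{x}))$ and noting that congruence by an invertible matrix and scaling by the nonzero constant $\underline{t}^{\underline{\alpha}}$ preserve rank --- is precisely the content behind that one-line dismissal, consistent with the coordinate computation the paper performs in its lemma on the form of equivariant bi-vector fields on the torus. Your care in distinguishing the dense-torus case (where the congruence is by $\mathrm{diag}(t_1,\dots,t_n)$) from boundary orbits (where one uses that $d\phi_{\underline{t}}$ is still an isomorphism in any chart on $Y_\sigma$) is a genuine improvement in rigor over the paper's treatment, but it is the same underlying approach.
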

\begin{proof}
It is obvious, since the $\pi$ is an equivariant form.
\end{proof}

First we describe equivariant bi-vector fields on the open dense torus $(\CC^*)^n \subset X$. Let $\underline{z}=(z_1, \ldots , z_n)$ be a system of coordinates on $(\CC^*)^n$ and $\Theta$ the holomorphic tangent bundle of $(\CC^*)^n.$
 
\begin{lemma}
With the above notations,  a bi-vector field $\pi \in \Gamma ((\CC^*)^n, \bigwedge^2\Theta)$ is equivariant of multi-degree $\underline{\alpha}=(\alpha_1, \ldots, \alpha_n)$ if and only if it has the form
\begin{equation}\label{eq.forme equivarianti} \pi(\underline{z})= \sum_{i,j=1}^n a_{ij} z_1^{\alpha_1} \cdots z_i^{\alpha_i+1}\cdots  z_j^{\alpha_j+1} \cdots  z_n^{\alpha_n} \frac{\partial}{\partial z_i}\wedge \frac{\partial }{\partial z_j} =  \sum_{i,j=1}^n a_{ij} \cdot {\underline{z}}^{\underline{\alpha}}\cdot z_i\cdot  z_j \frac{\partial}{\partial z_i}\wedge \frac{\partial }{\partial z_j},  \end{equation}
where $A=(a_{ij})_{ij}$ is an antisymmetric matrix with coefficients in $\CC$ and
${\underline{z}}^{\underline{\alpha}} = z_1^{\alpha_1}\cdots z_n^{\alpha_n}$.
\end{lemma}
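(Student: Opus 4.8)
The plan is to convert the geometric equivariance condition into a scalar functional equation for the coefficient functions $\pi_{ij}(\underline{z})$ and then to read off their form by evaluating at the identity element of the torus. The whole argument is elementary once the transformation law of the coordinate frame under the action is made explicit.

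First I would spell out how the torus action moves the coordinate frame of $\Theta$. Writing the action as $\phi_{\underline{t}}(\underline{z}) = \underline{t}\cdot\underline{z} = (t_1 z_1, \ldots, t_n z_n)$ and setting $\underline{w} = \phi_{\underline{t}}(\underline{z})$, the chain rule gives $(\phi_{\underline{t}})_* \frac{\partial}{\partial z_i} = t_i \frac{\partial}{\partial w_i}$, hence
\[ (\phi_{\underline{t}})_*\left(\frac{\partial}{\partial z_i}\wedge\frac{\partial}{\partial z_j}\right) = t_i\, t_j\, \frac{\partial}{\partial w_i}\wedge\frac{\partial}{\partial w_j}. \]
This is the only place where the precise meaning of the defining identity $\pi(\underline{t}\cdot\underline{x}) = \underline{t}^{\underline{\alpha}}\cdot\pi(\underline{x})$ enters: the right-hand side is to be read as $\underline{t}^{\underline{\alpha}}$ times the pushforward of $\pi(\underline{x})$ along $\phi_{\underline{t}}$, which is exactly what produces the factor $t_i t_j$. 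With this in hand I would write $\pi(\underline{z}) = \sum_{i,j} \pi_{ij}(\underline{z})\frac{\partial}{\partial z_i}\wedge\frac{\partial}{\partial z_j}$, evaluate both sides of the equivariance identity at $\underline{w} = \underline{t}\cdot\underline{z}$ in the frame $\frac{\partial}{\partial w_i}\wedge\frac{\partial}{\partial w_j}$, and compare coefficients to obtain
\[ \pi_{ij}(\underline{t}\cdot\underline{z}) = \underline{t}^{\underline{\alpha}}\, t_i\, t_j\, \pi_{ij}(\underline{z}) \qquad \mbox{for all } \underline{t},\underline{z}\in(\CC^*)^n. \]
Specializing to $\underline{z} = (1,\ldots,1)$ and setting $a_{ij} := \pi_{ij}(1,\ldots,1)$ forces $\pi_{ij}(\underline{t}) = a_{ij}\,\underline{t}^{\underline{\alpha}}\, t_i\, t_j$, which, after renaming $\underline{t}$ as $\underline{z}$, is precisely the expression \eqref{eq.forme equivarianti}. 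Antisymmetry of $A=(a_{ij})$ is inherited from the antisymmetry of the coefficient matrix of the bi-vector field, since $\pi_{ij} = -\pi_{ji}$ gives $a_{ij} = -a_{ji}$. For the converse I would simply substitute any $\pi$ of the displayed form into the functional equation and verify it holds termwise, so that such $\pi$ is equivariant of multi-degree $\underline{\alpha}$.

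I do not expect a serious obstacle here: the computation is linear and reduces to evaluation at a single point. The only step demanding care is the first one, namely correctly transporting the bivector by the differential of the action so as to account for the factor $t_i t_j$ \emph{in addition} to the weight $\underline{t}^{\underline{\alpha}}$; misreading the equivariance identity as a bare equality of coefficient functions (without the pushforward) would drop this factor and give the wrong exponents. Everything after that is bookkeeping.
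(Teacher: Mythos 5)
Your proposal is correct and follows essentially the same route as the paper: both interpret the equivariance identity via the differential of the torus action, so that the frame contributes the factor $t_i t_j$ (the paper encodes this by writing the frame at the translated point as $\frac{1}{t_i}\frac{\partial}{\partial z_i}\wedge\frac{1}{t_j}\frac{\partial }{\partial z_j}$), and both verify the ``if'' direction by direct substitution. Where you add something is the ``only if'' direction: the paper disposes of it with the single remark that ``the calculation shows that the only polynomials that are equivariant under the torus action are the monomials of a fixed multidegree,'' while you actually prove it, by extracting the scalar functional equation $\pi_{ij}(\underline{t}\cdot\underline{z})=\underline{t}^{\underline{\alpha}}\, t_i\, t_j\, \pi_{ij}(\underline{z})$ and evaluating at $\underline{z}=(1,\ldots,1)$, which pins down $\pi_{ij}$ everywhere with $a_{ij}=\pi_{ij}(1,\ldots,1)$. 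This evaluation-at-the-identity step is a genuine completion of what the paper leaves implicit: it needs no Laurent-series expansion and works for arbitrary holomorphic coefficient functions. Your closing caution is also exactly the right one --- reading the equivariance identity as a bare equality of coefficient functions, without the pushforward, would lose the factor $t_i t_j$ and yield exponents $\alpha_i$ instead of $\alpha_i+1$.
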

\begin{proof}
First check such a $\pi$ is equivariant. Recall that the action of the torus on itself is just the multiplication. For all $\underline{z}, \underline{t} \in (\CC^*)^n$, 
\begin{eqnarray*} \pi(\underline{t} \cdot \underline{z}) & = &  \sum_{i,j=1}^n a_{ij} \cdot \underline{t}^{\underline{\alpha}} {\underline{z}}^{\underline{\alpha}}\cdot t_i z_i\cdot  t_j z_j \frac{1}{t_i}\frac{\partial}{\partial z_i}\wedge \frac{1}{t_j}\frac{\partial }{\partial z_j} \\
&=& \sum_{i,j=1}^n a_{ij} \cdot \underline{t}^{\underline{\alpha}} {\underline{z}}^{\underline{\alpha}}\cdot z_i\cdot z_j \frac{\partial}{\partial z_i}\wedge\frac{\partial }{\partial z_j} = \underline{t}^{\underline{\alpha}}\cdot \pi(\underline{z}).  
 \end{eqnarray*}
Moreover, the calculation shows that the only polynomials that are equivariant under the torus action are the monomials of a fixed multidegree $\underline{\alpha}$. 
\end{proof}
Next we study the behaviour of the form $\pi$ under the change of immersion $(\CC^*)^n\hookrightarrow Y_\sigma$ of the dense torus in the affine toric subvariety $Y_\sigma \subset X$, when varying the maximal cone $\sigma$ in the 
fan $\Sigma$.

Let $X=TV(\Sigma)$ be a smooth toric variety and $\pi \in \Gamma(X,\bigwedge^2 \Theta_X)$ an equivariant bi-vector field on $X$.  Let $\sigma_0$  and $\sigma_1$ be two maximal cones of $\Sigma$. Since $\Sigma$ is a smooth fan, every maximal cone is generated by a $\ZZ$-basis of $N$. Without loss of generality,  we can assume the standard basis $\EE=\{ e_1, \ldots, e_n\}$ of $\ZZ^n$ to be a system of generators of the cone $\sigma_0$ and the form $\pi$ to be expressed on the toric affine subvariety $Y_{\sigma_0}$ by (\ref{eq.forme equivarianti}), where $z_i :=\chi^{e_i}$ are the coordinates on $Y_{\sigma_0}$. 
Let $\mathcal{V}:=\{v_1,\ldots,v_n \}$ be a basis of $\ZZ^n$ that generates the cone $\sigma_1$ and $w_i := \chi^{v_i}$ the corresponding coordinates on $Y_{\sigma_1}$.
The form $\pi$ is described on $Y_{\sigma_1}$ by the following

\begin{lemma}
 \label{lemma.cambio di coordinate}
Let $X=TV(\Sigma)$ be a smooth toric variety and $\pi \in \Gamma(X,\bigwedge^2 \Theta_X)$ an equivariant bi-vector field on $X$. In the above notations, $\pi$ can be expressed on $Y_{\sigma_1}$ by 
\begin{equation}\label{eq.forme eq base change}
\pi=\sum_{k,h=1}^n  b_{kh} \cdot  {\underline{w}}^{\underline{\beta}}\cdot  w_k \cdot w_h \frac{\partial}{\partial w_k}\wedge \frac{\partial }{\partial w_h},
  \end{equation}
with $ {\underline{w}}^{\underline{\beta}}=w_1^{\beta_1} \cdots w_n^{\beta_n} $, $B=(b_{kh})_{kh}=S \cdot A \cdot S^t $ and $\underline{\beta} = R^t  \cdot  \underline{\alpha}$, where $R$ is the matrix of base change between the system of generators $\mathcal{V}$ of $\sigma_1$ and the standard basis $\EE$ that generates $\sigma_0$ and $S=R^{-1}$.  
\end{lemma}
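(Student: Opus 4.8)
The plan is to exploit that, over the dense torus $(\CC^*)^n$, the comparison between the two embeddings $(\CC^*)^n \hookrightarrow Y_{\sigma_0}$ and $(\CC^*)^n \hookrightarrow Y_{\sigma_1}$ is a \emph{monomial} change of coordinates, and that a monomial substitution becomes \emph{linear} once one passes to logarithmic coordinates. Concretely, I would first rewrite (\ref{eq.forme equivarianti}) in terms of the torus-invariant (logarithmic) vector fields $\delta_i := z_i \frac{\partial}{\partial z_i}$, using the elementary identity $z_i z_j \frac{\partial}{\partial z_i}\wedge\frac{\partial}{\partial z_j} = \delta_i \wedge \delta_j$. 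This turns (\ref{eq.forme equivarianti}) into
\[ \pi = {\underline{z}}^{\underline{\alpha}} \sum_{i,j=1}^n a_{ij}\, \delta_i \wedge \delta_j, \]
and, writing $\varepsilon_k := w_k \frac{\partial}{\partial w_k}$ for the analogous invariant frame on $Y_{\sigma_1}$, the target formula (\ref{eq.forme eq base change}) becomes $\pi = {\underline{w}}^{\underline{\beta}}\sum_{k,h} b_{kh}\,\varepsilon_k \wedge \varepsilon_h$. The statement then splits into two independent tasks: relating the frame $\{\varepsilon_k\}$ to $\{\delta_i\}$, and relating the monomial ${\underline{w}}^{\underline{\beta}}$ to ${\underline{z}}^{\underline{\alpha}}$.

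For the frames, I would first record the monomial coordinate change. Since the coordinates on $Y_\sigma$ come from $\sigma^\vee \cap M$, the basis $\mathcal V$ of $N$ (whose expression in the standard basis is recorded by the columns of $R$) induces the dual change on $M$, and the upshot is $w_k = \prod_l z_l^{S_{kl}}$, equivalently $\log w_k = \sum_l S_{kl}\log z_l$. As $\delta_i$ and $\varepsilon_k$ are precisely the coordinate fields for the two systems of logarithmic coordinates, which are related by the \emph{constant} matrix $S$, the chain rule yields the coefficient-free linear relation $\varepsilon_k = \sum_l R_{lk}\,\delta_l$. Substituting this into $\sum_{k,h} b_{kh}\,\varepsilon_k\wedge\varepsilon_h$ and collecting the coefficient of $\delta_l\wedge\delta_m$ produces $(RBR^t)_{lm}$, so matching with $a_{lm}$ forces $RBR^t = A$, i.e. $B = SAS^t$. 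For the monomial, substituting $w_k = \prod_l z_l^{S_{kl}}$ into ${\underline{w}}^{\underline{\beta}}$ gives exponent $S^t\underline{\beta}$ on the $z$-variables, and requiring ${\underline{w}}^{\underline{\beta}} = {\underline{z}}^{\underline{\alpha}}$ forces $S^t\underline{\beta} = \underline{\alpha}$, i.e. $\underline{\beta} = R^t\underline{\alpha}$. Combining the two reproduces (\ref{eq.forme eq base change}).

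The genuinely delicate part is the linear-algebra bookkeeping. Three distinct lattice-level transformations are in play --- the change of basis $R$ of $N$ itself, the induced (transpose-inverse) change on the dual basis of $M$ that actually governs the characters and hence the coordinates, and the resulting chain-rule matrix acting on the logarithmic frames --- and it is easy to confuse them. In particular, deciding that the monomial exponent matrix is $S$ rather than $R$ or $R^t$ is already part of this bookkeeping, and a single misplaced transpose would break the symmetric conjugation $B = SAS^t$. Everything else is routine once one observes the conceptual point that in logarithmic coordinates the monomial substitution is strictly linear: no Jacobian functions of $\underline{z}$ ever appear, so both the scalar prefactor and the bi-vector part transform by constant matrices. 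This is exactly why the shape of the equivariant form (\ref{eq.forme equivarianti}) is preserved, with only its constant data $(A,\underline{\alpha})$ being replaced by $(B,\underline{\beta})$.
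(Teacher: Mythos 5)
Your proof is correct and follows essentially the same route as the paper's: both rest on the monomial coordinate change $w_k=\prod_l z_l^{s_{kl}}$ (coming from the dual bases of $M$) and on the fact that the logarithmic frames $z_i\frac{\partial}{\partial z_i}$ and $w_k\frac{\partial}{\partial w_k}$ are related by the constant matrices $S$ and $R$, which yields $B=SAS^t$ and $\underline{\beta}=R^t\underline{\alpha}$. The only immaterial difference is one of direction: you invert the frame relation ($\varepsilon_k=\sum_l r_{lk}\delta_l$) and match coefficients, whereas the paper substitutes $z_j\frac{\partial}{\partial z_j}=\sum_i s_{ij}w_i\frac{\partial}{\partial w_i}$ directly into (\ref{eq.forme equivarianti}).
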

\begin{proof} In the above notations, $R:=(r_{ij})_{ij}$ is the matrix of base change from  $\mathcal{V}$ to $\EE$ and $S:=R^{-1}=(s_{ij})_{ij}$ is the matrix of the base change from $\mathcal{E}$ to $\mathcal{V}$, i.e.
\[  v_j = \sum_{i=1}^n r_{ij}e_i  \quad \mbox{ and }\quad e_j =  \sum_{i=1}^n s_{ij}v_i.\]

The dual basis ${\mathcal{V}}^*:=\{v_1^*, \ldots, v_n^*\}$ is a system of generators of the dual cone $\sigma_1^\vee$. The matrix of the base change from ${\mathcal{V}}^*$ to the dual of the standard basis $\EE^*$  is $(R^{1})^{t}$ and the matrix of the base change from $\EE^*$ to ${\mathcal{V}}^*$  is $R^t$, i.e.
\[ v_i^* = \sum_{j=1}^n s_{ij} e_j^* \quad \mbox{ and } \quad  e_i^* = \sum_{j=1}^n r_{ij} v_j^*.\]
The coordinates on the affine subvariety $Y_{\sigma_1}$ are by definition $ w_i:=\chi^{v_i^*}$.  
Thus, 
\begin{eqnarray*} w_i&:=& \chi^{v_i^*} = \chi^{\sum_{j=1}^n s_{ij} e_j^*} = \chi^{s_{i1}e_1^*}\cdot \ldots \cdot \chi^{s_{in}e_n^*} = z_1^{s_{i1}}\cdot \ldots \cdot z_n^{s_{in}} \quad \mbox{and} \\
 z_i&:=& \chi^{e_i^*} =\chi^{\sum_{j=1}^n r_{ij} v_j^*} = \chi^{r_{i1}v_1^*} \cdot \ldots \cdot \chi^{r_{in}v_n^*}= w_1^{r_{i1}}\cdot\ldots\cdot w_n^{r_{in}}. \end{eqnarray*}
Moreover
\[ z_j \cdot \frac{\partial}{\partial z_j} = z_j \cdot \sum_{i=1}^n \frac{\partial w_i}{\partial z_j}\frac{\partial}{\partial w_i} =z_j  \cdot\sum_{i=1}^n s_{ij} \cdot z_1^{s_{i1}}\cdots z_j^{s_{ij}-1} \cdots z_n^{s_{in}} \frac{\partial}{\partial w_i} = \sum_{i=1}^n s_{ij} w_i \frac{\partial}{\partial w_i}. \]
On the affine subvariety $Y_{\sigma_1}$, the espression of the $2$-form $\pi$  described in (\ref{eq.forme equivarianti}) becomes 
\begin{eqnarray*}   
\pi&=&  \sum_{i,j=1}^n a_{ij} \cdot z_1^{\alpha_1}\cdots  z_{n}^{\alpha_n}\cdot  z_i \frac{\partial}{\partial z_i}\wedge z_j \frac{\partial }{\partial z_j} =\\
&=& \sum_{i,j=1}^n a_{ij} \cdot z_1^{\alpha_1}\cdots z_{n}^{\alpha_n}\cdot  \left(\sum_{k=1}^n s_{ki} w_k \frac{\partial}{\partial w_k}\right)\wedge\left(\sum_{h=1}^n s_{hj} w_h \frac{\partial }{\partial w_h} \right) =\\
&=& \sum_{i,j=1}^n a_{ij} \left(w_1^{r_{11}}\cdots w_n^{r_{1n}}\right)^{\alpha_1}  \cdots \left(w_1^{r_{n1}}\cdots w_n^{r_{nn}}\right)^{\alpha_n} \left(\sum_{k=1}^n s_{ki} w_k \frac{\partial}{\partial w_k}\right)\wedge\left(\sum_{h=1}^n s_{hj} w_h \frac{\partial }{\partial w_h} \right) =\\
&=&  \sum_{k,h=1}^n \sum_{i,j=1}^n s_{ki} \cdot a_{ij} \cdot s_{hj} \cdot  {\underline{w}}^{\underline{\beta}}\cdot  w_k \frac{\partial}{\partial w_k}\wedge w_h \frac{\partial }{\partial w_h} =\\
&=&  \sum_{k,h=1}^n  b_{kh} \cdot  {\underline{w}}^{\underline{\beta}}\cdot  w_k \frac{\partial}{\partial w_k}\wedge w_h \frac{\partial }{\partial w_h},\\
  \end{eqnarray*}
where 
\[  B=(b_{kh})_{kh}=S \cdot A \cdot S^t \quad \mbox{and} \quad   \underline{\beta}=\left(\begin{array}{c}\beta_1\\ \vdots \\ \beta_n \end{array} \right) = \left(\begin{array}{ccc}r_{11} & \ldots & r_{n1}  \\  \vdots & \vdots & \vdots \\ r_{1n} & \ldots & r_{nn}\end{array} \right)\cdot \left(\begin{array}{c}\alpha_1\\ \vdots \\ \alpha_n \end{array} \right) = R^t  \cdot  \underline{\alpha}. \]
\end{proof}

\subsection{Degeneracy loci} \label{sub.sect degeneracy loci}
In this subsection we leave the toric setting and consider a smooth complex variety $X$ of dimension $n$ with a bi-vector field $\pi\in \Gamma(X, \bigwedge^2 \Theta_X)$. 
There is a partition 
\[ X = \coprod_{s \in 2\NN}  X_s, \quad \mbox{where } X_s := \{x\in X \mid \rk_x \pi = s\},\]
where the sets $X_s$ will not be subvarieties of X in general. 
While 
\[   X_{\leq s}:= \{x\in X \mid \rk_x \pi\leq  s\}= \coprod_{s\leq r} X_r\] 
is a closed subvariety of $X$ and is called the \emph{$s$-th degeneracy locus} of $\pi$.

The following Section \ref{sec.main thm} is dedicated to the proof of our
\begin{maintheorem}  
Let $X$ be a smooth toric variety of dimension $n$ and $\pi \in \Gamma(X, \bigwedge^2 \Theta_X)$ an equivariant bi-vector field on $X$. 
Consider an integer $k > 0$ such that $2k < n$. Then the degeneracy locus
$X_{\leq 2k}$ is not empty and contains at least a component of dimension $\geq 2k+1$.
Moreover, if the degeneracy locus $X_{\leq 0}$ is not empty, it contains at least a curve.
If in addition $X$ is compact, then $X_{\leq 0}\neq \emptyset$. 
\end{maintheorem}
We now explain where the interest for degeneracy loci comes from, in which cases they are known to be not empty and to contain components of a required dimension.
In \cite{Bondal}, Bondal formulated the following
\begin{conjecture}
Let $X$ be a connected Fano variety and $\pi$ a Poisson structure on it. Let  $k \geq 0$ be an integer, such that $2k < n$. Then, if the subvariety 
$X_{\leq 2k}$ is not empty, it contains at least a component of dimension $\geq 2k+1$.
\end{conjecture}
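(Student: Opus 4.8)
The plan is to reduce the Main Theorem to a combinatorial statement about the fan $\Sigma$ by exploiting the orbit-cone correspondence together with the two normal forms established in Lemma~\ref{lemma.cambio di coordinate} and equation~(\ref{eq.forme equivarianti}). The key observation is that the rank of $\pi$ is constant on each orbit $O(\tau)$, so it suffices to locate a cone $\tau \in \Sigma$ of dimension $n-(2k+1)$ such that $\rk \pi \leq 2k$ on the orbit $O(\tau)$; by the orbit-cone correspondence this orbit has dimension exactly $2k+1$, and its closure is a torus-invariant subvariety of the required dimension contained in $X_{\leq 2k}$. The strategy is therefore entirely local: fix a maximal cone $\sigma$ containing $\tau$, pass to the coordinates $w_1,\dots,w_n$ in which $\pi$ has the form~(\ref{eq.forme eq base change}) on $Y_\sigma$, and compute the rank of the coefficient matrix when restricted to the orbit $O(\tau)$, which by the explicit description at the end of Section~\ref{sec.orbit-cone} is the locus where the first few coordinates vanish.

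\textbf{The main computation.} On the chart $Y_\sigma$ with $\sigma$ generated by $\{e_1,\dots,e_n\}$, take the face $\tau$ generated by $\{e_1,\dots,e_h\}$, so that $O(\tau)=\{w_i=0 \text{ for } i\leq h,\ w_i\neq 0 \text{ otherwise}\}$. Evaluating the matrix $\Pi=(b_{kh}\,\underline{w}^{\underline\beta} w_k w_h)_{kh}$ of~(\ref{eq.forme eq base change}) at a point of $O(\tau)$, every entry in row or column $i$ with $i\leq h$ vanishes because of the factor $w_i$ (unless $\beta_i<0$ causes a pole, which I would rule out by requiring $\pi$ to be a genuine global section, forcing $\underline\beta$ to be nonnegative on the relevant monomials). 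Hence the rank at such a point is at most the rank of the antisymmetric $(n-h)\times(n-h)$ submatrix indexed by $\{h+1,\dots,n\}$. Since this submatrix is antisymmetric, its rank is even, so choosing $h$ appropriately forces the rank to drop below any prescribed even bound. The heart of the argument is then a counting/existence statement: I must show that for each $k>0$ with $2k<n$ there genuinely exists a cone $\tau\in\Sigma$ of the right dimension $n-(2k+1)$ on whose orbit the relevant antisymmetric submatrix has rank $\leq 2k$. This is where Lemma~\ref{lemma.cono} enters — it controls which rays of a cone lie in a given half-space, which I would use to track how the multidegree $\underline\beta=R^t\underline\alpha$ and the transformed matrix $B=SAS^t$ behave as $\sigma$ varies, ensuring the vanishing pattern of~(\ref{eq.forme eq base change}) is compatible across charts.

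\textbf{The case $k=0$ and compactness.} For $2k=0$ the target is a $1$-dimensional orbit, i.e.\ a cone $\tau$ of dimension $n-1$ on whose orbit $\pi$ vanishes identically. Here antisymmetry alone no longer forces a rank drop, so the existence of such a cone must be supplied by a separate argument. When $X$ is compact the fan $\Sigma$ is complete, and I would use Lemma~\ref{lemma.cono} applied to suitable hyperplanes determined by the exponents $\alpha_i$ and the base-change relations to produce, by a pigeonhole or extremal-ray argument over all maximal cones covering $N_\RR$, a ray along which all the monomial coefficients of~(\ref{eq.forme equivarianti}) are forced to vanish — yielding the required curve. In the non-compact case completeness fails and no such ray need exist, which is exactly why $X_{\leq 0}\neq\emptyset$ must be assumed as a hypothesis.

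\textbf{Main obstacle.} I expect the crux to be the existence step rather than the local rank computation: producing a single cone $\tau$ of the correct codimension that works globally. The local calculation on each affine chart is essentially the linear-algebra fact that an antisymmetric matrix has even rank, but verifying that the vanishing conditions are mutually consistent as $\sigma$ ranges over the maximal cones — and that such a $\tau$ survives in the fan — is the delicate part. The transformation rules $B=SAS^t$ and $\underline\beta=R^t\underline\alpha$ of Lemma~\ref{lemma.cambio di coordinate} guarantee coordinate-independence of the rank, but translating ``the rank drops on some orbit'' into ``there is a cone of exactly the right dimension in $\Sigma$'' requires a careful combinatorial analysis of how the sign pattern of the multidegree interacts with the cone structure, and it is here that Lemma~\ref{lemma.cono} does the real work.
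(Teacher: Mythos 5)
There is a fundamental mismatch between the statement you were asked to prove and the statement your plan addresses. The statement is Bondal's conjecture: $X$ is an \emph{arbitrary} connected Fano variety and $\pi$ an \emph{arbitrary} Poisson structure on it. Your entire strategy presupposes instead that $X$ is a smooth toric variety and that $\pi$ is equivariant under the torus action: the fan $\Sigma$, the orbit-cone correspondence, the monomial normal form (\ref{eq.forme equivarianti}), and Lemmas \ref{lemma.cambio di coordinate} and \ref{lemma.cono} simply do not exist on a general Fano variety, which carries no torus action; a Poisson structure on it has no multidegree $\underline{\alpha}$, no constant coefficient matrix $A$, and no chart-by-chart monomial description. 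What you have sketched is (an outline of) the paper's Main Theorem, which is the toric, equivariant analogue — and which is logically independent of the conjecture: a smooth toric variety need not be Fano or even compact, an equivariant bi-vector field need not be Poisson, and conversely a Fano variety need not be toric. The paper itself does \emph{not} prove the statement in question; it records it as an open conjecture (with partial results by Polishchuk in special cases) and proves only the toric equivariant result. So the proposal, however it is completed, cannot close the gap between the toric setting and the general Fano hypothesis; that gap \emph{is} the conjecture.

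Even judged as a sketch of the Main Theorem, two steps are off. First, your claim that regularity of $\pi$ as a global section forces the exponents to be nonnegative is false: whenever $a_{ij}\neq 0$ one only gets $\alpha_i,\alpha_j\geq -1$ and $\alpha_h\geq 0$ for $h\neq i,j$ (the monomial $z_i^{\alpha_i+1}=z_i^{0}=1$ is perfectly regular), and most of the paper's proof is devoted precisely to the case where some $\alpha_i=-1$: if exactly one index has $\alpha_{h_1}=-1$ one chooses an orbit with $z_{h_1}\neq 0$; if two do, then $\pi=2a_{12}\,z_3^{\alpha_3}\cdots z_n^{\alpha_n}\,\frac{\partial}{\partial z_1}\wedge\frac{\partial}{\partial z_2}$ has rank $\leq 2$ everywhere and the $k=0$ case needs a separate argument; if more than two do, $A=0$. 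Second, the ``existence step'' you defer to a vague pigeonhole argument is, in the paper, essentially trivial for $k>0$: when all $\alpha_i\geq 0$, the orbit of \emph{any} codimension-$(2k+1)$ face of a single maximal cone works, since the evaluated matrix has $n-2k-1$ zero rows and columns and even rank. Lemma \ref{lemma.cono} is not used to track sign patterns across charts for general $k$; it is invoked exactly once, in the compact $k=0$ case with $\pi=2a_{12}\frac{\partial}{\partial z_1}\wedge\frac{\partial}{\partial z_2}$, applied to the hyperplane $z_1+z_2=0$ to produce a maximal cone with a generating ray $\rho$ in $H_0^-$, so that on that chart $\beta_1=-\rho_1-\rho_2>0$ and the rank-$0$ locus meets a coordinate hyperplane there.
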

We recall that, a \emph{Fano variety} is a compact complex variety whose anticanonical bundle  is ample.

A \emph{Poisson structure} on a smooth complex variety $X$ is a bi-vector field $\pi \in  \Gamma(X, \bigwedge^2 \Theta_X)$ such that the Poisson bracket 
\[ \{f,g\} := \pi 	\contr (df \wedge dg ) \]
satisfies the Jacobi identity on holomorphic functions. An equivalent condition for a bi-vector field $\pi$ to be a Poisson structure is to satisfy $[\pi, \pi]_{SN}=0$, where $[-,-]_{SN}$ is the Schouten-Nijenhuis bracket (see \cite{Polishchuk}). More explicitly, let $\{z_1,\ldots, z_n\}$ be a system of local coordinates around a point $x\in X$ and 
\[\pi= \sum_{i,j=1}^n \pi_{ij} \frac{\partial}{\partial z_i}\wedge \frac{\partial}{\partial z_j}\]
be a bi-vector field.
Then, $[\pi, \pi]_{SN}=0$ if and only if
\begin{equation}\label{eq.condizione Poisson}\sum_{i=1}^n \pi_{ij}  \frac{\partial \pi_{hk}}{\partial z_i} + \pi_{ih}  \frac{\partial \pi_{kj}}{\partial z_i} + \pi_{ik}  \frac{\partial \pi_{jh}}{\partial z_i} =0, \quad \mbox{for all } j,h,k. \end{equation}
 
In \cite{Beauville}, Beauville stated the following more optimistic version of Bondal's conjecture.
\begin{conjecture}
Let $X$ be a smooth complex and compact variety and $\pi$ a Poisson structure on it. Let  $k \geq 0$ be an integer, such that $2k < n$. Then, if the subvariety 
$X_{\leq 2k}$ is not empty, it contains at least a component of dimension $\geq 2k+1$.
\end{conjecture}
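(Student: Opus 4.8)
The plan is intrinsic rather than toric: I would exploit that the integrability condition $[\pi,\pi]_{SN}=0$ equips $X$ with a holomorphic symplectic foliation whose leaf dimensions are precisely the local values of $\rk\pi$, and then try to upgrade by one unit the dimension bound that these leaves give, using the compactness of $X$. First I would dispose of the trivial case: if the generic rank of $\pi$ on $X$ is $\leq 2k$, then $X_{\leq 2k}=X$ and every component has dimension $n>2k$, so I may assume $X_{\leq 2k}$ is a proper closed subvariety and that $\pi$ has rank $>2k$ on a dense open set.

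The easy half of the bound comes from the foliation. At a general point $x$ of a component $Z$ on which $\pi$ has generic rank exactly $2k$, the image of $\pi^{\#}_x\colon \Omega^1_{X,x}\to\Theta_{X,x}$ is the $2k$-dimensional tangent space to the symplectic leaf $L$ through $x$; since $\rk\pi$ is constant along leaves, $L$ lies in the rank-exactly-$2k$ stratum, so $L\subseteq X_{\leq 2k}$ and $L\subseteq Z$ near $x$, giving $\dim Z\geq\dim L=2k$. Thus the entire content of the conjecture is the extra unit of dimension; moreover, when no component has generic rank exactly $2k$, i.e. $X_{\leq 2k}=X_{\leq 2j}$ with $j<k$, the leaf argument only yields $\dim Z\geq 2j$ and one must produce an excess of $2(k-j)+1$, which already shows that the bookkeeping over the rank stratification is delicate.

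The hard part is this ``$+1$''. I would argue by contradiction, assuming $\dim Z=2k$, so that the leaf $L$ is open dense in the component $Z=\overline{L}$. Weinstein's splitting theorem writes $\pi$ near $x$ as the symplectic form along $L$ plus a transverse Poisson structure $\pi_T$ on an $(n-2k)$-dimensional slice $W$ with $\pi_T(0)=0$; since $Z$ meets $W$ in the single point $x$, the zero locus of $\pi_T$ on the germ $(W,0)$ is isolated. This is exactly the $k=0$ case of the conjecture, but for a germ, with the linearization of $\pi_T$ at $0$ a Lie algebra structure on the conormal space. I expect this to be the main obstacle: the transverse structure is a purely local object carrying none of the global positivity that makes the compact statement plausible, so the reduction does not by itself forbid an isolated zero.

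Consequently I would instead attack the $k=0$ statement globally and feed it back. Regarding $\pi$ as a section of $\bigwedge^2\Theta_X$, its zero scheme is a locus of expected codimension $\binom{n}{2}$ whose class is a Pfaffian/Chern expression in $\Theta_X$; compactness together with the integrability $[\pi,\pi]_{SN}=0$, which forces the zero scheme to be a Poisson subscheme and so to violate the generic expected codimension, should prevent isolated zeros. Polishchuk's results already yield this for $X=\PP^n$ and for the top non-trivial locus when $\pi$ has maximal rank at the general point, and those are the cases I would expect to reach with these non-toric tools. The genuinely open part, and where I would place the principal gap, is controlling the excess dimension of the degeneracy loci of an integrable, hence highly non-generic, section of $\bigwedge^2\Theta_X$ on an arbitrary smooth compact $X$, without either the ampleness of $-K_X$ available in the Fano case or the torus symmetry that makes the equivariant toric Main Theorem tractable.
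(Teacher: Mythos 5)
You were asked to prove what the paper itself records as an \emph{open conjecture}: this statement is Beauville's conjecture (\cite{Beauville}), quoted in Section \ref{sub.sect degeneracy loci} as motivation, and the paper offers no proof of it. The paper's actual result, the Main Theorem, is a toric and equivariant analogue (for bi-vector fields that need not satisfy $[\pi,\pi]_{SN}=0$), proved by explicit computations in affine toric charts; your foliation-theoretic plan would not even apply in that setting, since without integrability there is no symplectic foliation. So the only question is whether your argument closes the conjecture itself, and --- as you candidly say --- it does not. The one step you complete, that a component $Z$ meeting the rank-exactly-$2k$ stratum satisfies $\dim Z \geq 2k$ via the symplectic leaf through a general point, is exactly the first bullet of the Proposition from \cite{Beauville} reproduced in the paper, i.e.\ known; everything beyond it remains a plan.

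The genuine gap is the ``$+1$'', and both of your routes to it fail for identifiable reasons. First, the Weinstein-splitting reduction is purely local and therefore cannot work: local Poisson germs with isolated zeros exist, and the paper's own closing example (Gay's structure $\pi = z_3\,\partial_{z_1}\wedge\partial_{z_2} + z_2\,\partial_{z_1}\wedge\partial_{z_3} + 2z_1\,\partial_{z_2}\wedge\partial_{z_3}$ on $\CC^3$, with $X_{\leq 0}=\{0\}$) shows nothing forbids the transverse structure $\pi_T$ from having an isolated zero; the reduction discards precisely the global compactness input the conjecture needs, as you note. Second, the Pfaffian/Chern-class heuristic for $k=0$ does not yield a contradiction: for $n>3$ the zero scheme of a section of $\bigwedge^2\Theta_X$ has expected codimension $\binom{n}{2}>n$, so the generic prediction is emptiness and carries no lower bound on dimension, while for $n=3$ the expected zero locus is a \emph{finite} set of points counted by $c_3\bigl(\bigwedge^2\Theta_X\bigr)$ --- the naive intersection-theoretic prediction is exactly the isolated zeros you must exclude. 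Your inference that the zero scheme ``is a Poisson subscheme and so violates the expected codimension'' is a non sequitur: degeneracy loci of Poisson structures are indeed Poisson subschemes (\cite{Polishchuk}), but that fact alone bounds nothing; extracting a dimension bound from integrability plus compactness on an arbitrary smooth compact $X$ is precisely the open content, settled only in the special cases you cite (projective threefolds per \cite{Beauville}, and $\PP^n$ or generically maximal rank per \cite{Polishchuk}). In short: your proposal is an accurate map of the difficulty, but it proves only the known weak bound and leaves the conjecture's essential claim untouched.
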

As explained in \cite{Beauville}, there are some arguments in favor of this conjecture. 
\begin{proposition}
Let $X$ be a smooth compact complex variety and $\pi$ a Poisson structure on it. Then, 
\begin{itemize}
\item Every component of $X_s$ has dimension $\geq s$.
\item Let $r$ be the generic rank of $\pi$. Assume that $c_1(X)^q \neq 0$ in $H^q(X, \Omega_X^q)$, where $q=\dim X - r +1$. Then, the degeneracy locus $X\setminus X_r$ has a component of dimension $>r-2$.
\item Let $X$ be a projective threefold. If $X_0$ is not empty, it contains a curve. 
\end{itemize}
\end{proposition}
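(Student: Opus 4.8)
The plan is to deduce all three statements from the symplectic foliation of the Poisson manifold $(X,\pi)$. Let $\pi^\#\colon\Omega_X\to\Theta_X$ be the contraction $\mu\mapsto\pi\contr\mu$; condition \eqref{eq.condizione Poisson} says precisely that the distribution $\operatorname{Im}\pi^\#\subseteq\Theta_X$ is involutive, so $X$ is foliated by immersed symplectic leaves, the leaf $L_x$ through $x$ having dimension $\rk_x\pi$ and carrying the nondegenerate restriction $\pi|_{L_x}$. The key preliminary observation I would record is that $\rk\pi$ is constant along each leaf (equal to $\dim L_x$), whence $L_x\subseteq X_{\rk_x\pi}$. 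The first assertion is then immediate: for $x\in X_s$ the leaf $L_x$ is an $s$-dimensional immersed submanifold through $x$ contained in $X_s$, so $\dim_x X_s\ge s$ at every point and every component of $X_s$ has dimension $\ge s$.

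For the second assertion I would restrict to the dense open set $X_r$ on which the rank equals its generic value $r$. There $F:=\operatorname{Im}\pi^\#$ is a genuine rank-$r$ subbundle of $\Theta_X$ tangent to the leaves, and $\pi^{\wedge r/2}$ is a nowhere-vanishing section of $\det F$; hence $c_1(F)|_{X_r}=0$ and the normal bundle $Q:=\Theta_X/F$, of rank $s:=n-r$, satisfies $c_1(Q)|_{X_r}=c_1(X)|_{X_r}$. Bott's vanishing theorem for this codimension-$s$ foliation kills every Chern polynomial of $Q$ of degree $>s$; taking $q=s+1=\dim X-r+1$ gives $c_1(X)^q|_{X_r}=0$. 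Thus $c_1(X)^q$ lies in the image of the cohomology of $X$ supported on $D:=X\setminus X_r$. Since $H^{2q}_D(X)=0$ whenever $\dim_\CC D<n-q=r-1$, the hypothesis $c_1(X)^q\neq 0$ in $H^q(X,\Omega_X^q)$ forces $\dim D\ge r-1>r-2$, giving a component of the required dimension.

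For the third assertion, on a projective threefold $\bigwedge^2\Theta_X\cong\Omega_X\otimes K_X^{-1}$, so $\pi$ is a $K_X^{-1}$-valued holomorphic $1$-form $\eta$ with zero scheme $X_0$; as an antisymmetric $3\times3$ matrix has rank $0$ or $2$, the generic rank is $2$ and $\eta$ defines a codimension-one foliation on $X_2=X\setminus X_0$ with normal line bundle $K_X^{-1}$ and singular locus exactly $X_0$. The point is to exclude $\dim X_0=0$. If $c_1(X)^2\neq0$ this follows at once from the second assertion (with $r=2$, $q=2$). In the complementary case I would argue that a nonempty finite $X_0$ is incompatible with the global structure: removing finitely many points leaves $H^4$ unchanged, so Bott vanishing would force $c_1(X)^2=0$ consistently, and I would then invoke the Baum--Bott residue formula for $\eta$ to see that isolated singularities cannot carry the relevant characteristic number, a contradiction; hence $X_0$ has a component of positive dimension, i.e.\ contains a curve.

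I expect the real difficulty to sit entirely in the third assertion. The first two are formal once one has the leaf decomposition, Bott vanishing, and the elementary local-cohomology dimension estimate. By contrast, ruling out isolated zeros of a Poisson bivector is not a local matter---the linear bracket of $\mathfrak{sl}_2$ already produces an isolated degeneracy---so the argument must use a genuinely global input (projectivity together with a Baum--Bott residue computation), and making that step precise is the part I would expect to require the most care.
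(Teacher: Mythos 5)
First, a point of orientation: the paper does not prove this proposition at all. It is quoted, following \cite{Beauville}, as known evidence for the conjecture; the first two items are theorems of Polishchuk \cite{Polishchuk}, and the third is a theorem of Druel (\emph{Structures de Poisson sur les vari\'et\'es alg\'ebriques de dimension} $3$, Bull. Soc. Math. France 127 (1999)). So your proposal has to be measured against those sources. Your first two items essentially reproduce the standard arguments and are sound in substance: item one via the (holomorphic Weinstein) symplectic foliation, with the leaf through $x$ an immersed submanifold of dimension $\rk_x\pi$ on which the rank is constant, is exactly Polishchuk's proof; item two via Bott vanishing on $X_r$ plus a support-cohomology dimension bound is also the right mechanism. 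One repair is needed in item two: the hypothesis lives in $H^q(X,\Omega^q_X)$, while you argue in topological cohomology with supports. These are interchangeable on K\"ahler (in particular projective) manifolds, but the proposition is asserted for arbitrary smooth compact complex $X$; the clean fix, which is Polishchuk's, is to work throughout with Atiyah--Chern classes in $H^q(X,\Omega^q_X)$, where Bott vanishing still holds, and to replace your topological estimate by the coherent one $H^q_D(X,\Omega^q_X)=0$ for $q<\operatorname{codim}_{\CC}D$ (depth of a locally free sheaf along $D$).

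The genuine gap is in item three, precisely at the step you flag as delicate. The reduction to item two when $c_1(X)^2\neq 0$ is fine. But when $c_1(X)^2=0$, the Baum--Bott residue argument cannot yield a contradiction, for the following concrete reason. Suppose $X_0$ is finite and nonempty. By Malgrange's singular Frobenius theorem, an integrable holomorphic $1$-form in dimension $3$ whose singular set has codimension $\geq 3$ admits a local first integral at each singular point: $\eta=h\,df$ with $h$ a unit. Then $d\eta=(d\log h)\wedge\eta$, so one may take $\alpha=d\log h$ as the $1$-form entering the residue, and since $\alpha$ is closed, the $c_1^3$ Baum--Bott residue (an integral of $\alpha\wedge d\alpha\wedge d\alpha$ over the link of the point) vanishes at every such singularity. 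Hence the residue theorem only asserts $c_1(K_X^{-1})^3=c_1(X)^3=0$, which is automatic once $c_1(X)^2=0$: isolated singularities do not ``fail to carry'' the relevant characteristic number, they carry zero, and Baum--Bott gives no information at all in exactly the case where you need it. The quantity that is genuinely positive when $X_0$ is finite and nonempty is the length of the zero scheme of $\eta$ as a section of the rank-$3$ bundle $\Omega^1_X\otimes K_X^{-1}$, namely $c_1c_2-c_3$, and proving that this situation cannot occur on a projective threefold is the actual content of Druel's theorem, whose proof uses Malgrange's normal form together with substantially more global input and occupies a full paper. As written, your third item is a plan whose key step is false in the only case where it is invoked.
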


In \cite{Polishchuk}, Polishchuk proved Bondal's conjecture for the
maximal nontrivial degeneracy locus in two cases: when $X$ is the projective space and when the Poisson
structure $\pi$ has maximal possible rank at the general point.

In \cite{Gay}, Gay studied invariant Poissons structures on a smooth toric variety. In this setting he proved Bondal's conjecture and pointed out that compactness is not necessary.  

We will study equivariant bi-vector fields on smooth toric varieties. Our result is a little stronger then the one conjectured by Bondal. For $k \neq 0$,  we even prove the degeneracy loci to be not empty, while for $k=0$ the non emptiness is true just for compact toric varieties.
Even if we will consider all equivariant bi-vector fields on a toric variety $X$, we spell out when such a bi-vector field is a Poisson structure.
\begin{lemma}
Let $X$ be a toric variety and $\pi \in \Gamma(X, \bigwedge^2 \Theta_X)$ an equivariant bi-vector field described on the open dense torus $(\CC^*)^n$ as in  
(\ref{eq.forme equivarianti}). It defines a Poisson structure on $X$ if and only if
\[  \sum_{\scriptsize{\begin{array}{c}i=1 \\ i\neq j,h,k\end{array}}}^n  \alpha_i   \left(a_{ij} a_{hk} + a_{ih} a_{kj}  + a_{ik} a_{jh} \right)=0 \quad \mbox{for all } j\neq h \neq k. \]
\end{lemma}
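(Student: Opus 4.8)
The plan is to work on the dense open torus $(\CC^*)^n \subset X$ and to use the Schouten--Nijenhuis characterization of Poisson structures, i.e. condition (\ref{eq.condizione Poisson}). Since $X$ is irreducible and $(\CC^*)^n$ is dense, the global section $[\pi,\pi]_{SN}\in\Gamma(X,\bigwedge^3\Theta_X)$ vanishes on $X$ if and only if it vanishes on $(\CC^*)^n$; so it suffices to test (\ref{eq.condizione Poisson}) against the global coordinates $z_1,\dots,z_n$ on the torus, where by (\ref{eq.forme equivarianti}) the components of $\pi$ are $\pi_{ij}=a_{ij}\,\underline{z}^{\underline{\alpha}}\,z_i z_j$.

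First I would compute the derivatives occurring in (\ref{eq.condizione Poisson}). Writing $P=\underline{z}^{\underline{\alpha}}$ and noting that the monomial $\pi_{hk}=a_{hk}\,P\,z_h z_k$ has $z_i$-exponent $\alpha_i+[i{=}h]+[i{=}k]$, one obtains
\[ \frac{\partial \pi_{hk}}{\partial z_i}=a_{hk}\,\bigl(\alpha_i+[i{=}h]+[i{=}k]\bigr)\,\frac{P\,z_h z_k}{z_i}. \]
The key observation, which makes the whole computation collapse, is that each product $\pi_{ij}\,\frac{\partial \pi_{hk}}{\partial z_i}$ carries one and the same monomial, independent of the summation index $i$: indeed $\pi_{ij}\cdot\frac{P z_h z_k}{z_i}=a_{ij}\,P\,z_i z_j\cdot\frac{P z_h z_k}{z_i}=a_{ij}\,P^2\,z_j z_h z_k$, because the factor $z_i$ produced by $\pi_{ij}$ cancels the $z_i$ removed by the derivative. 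Hence every term in (\ref{eq.condizione Poisson}) equals $P^2 z_j z_h z_k$ times a constant, and since $P^2 z_j z_h z_k$ is nowhere zero on $(\CC^*)^n$, the Poisson condition reduces to the vanishing of that constant.

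Collecting the three cyclic summands, summing over $i$ and dividing by $P^2 z_j z_h z_k$, condition (\ref{eq.condizione Poisson}) for fixed distinct $j,h,k$ becomes the vanishing of
\[ a_{hk}\Bigl(\textstyle\sum_i\alpha_i a_{ij}+a_{hj}+a_{kj}\Bigr)+a_{kj}\Bigl(\textstyle\sum_i\alpha_i a_{ih}+a_{kh}+a_{jh}\Bigr)+a_{jh}\Bigl(\textstyle\sum_i\alpha_i a_{ik}+a_{jk}+a_{hk}\Bigr). \]
I would then check that the six constant terms not involving $\alpha$ cancel in pairs by antisymmetry $a_{pq}=-a_{qp}$ (for instance $a_{hk}a_{kj}+a_{kj}a_{kh}=a_{kj}(a_{hk}+a_{kh})=0$), leaving exactly $\sum_i\alpha_i\bigl(a_{ij}a_{hk}+a_{ih}a_{kj}+a_{ik}a_{jh}\bigr)$. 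For the same antisymmetry reason the summands with $i\in\{j,h,k\}$ vanish identically, so the sum over all $i$ coincides with the restricted sum over $i\neq j,h,k$, giving the stated equation. Finally I would note that when two of $j,h,k$ coincide the left-hand side of (\ref{eq.condizione Poisson}) is automatically zero (using $\pi_{jj}=0$ and $\pi_{jk}=-\pi_{kj}$), so these cases impose no further constraints.

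The argument is entirely elementary, so I do not expect a genuine conceptual obstacle; the one step worth isolating is the cancellation bookkeeping. Getting right that the $z_i$-cancellation forces every term to share the monomial $P^2 z_j z_h z_k$, and that both the Kronecker-delta terms and the $i\in\{j,h,k\}$ terms drop out by antisymmetry, is the part where an error would most easily creep in, and that is where I would concentrate the care.
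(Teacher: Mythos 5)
Your proof is correct and takes essentially the same route as the paper's: both reduce to verifying the Schouten--Nijenhuis condition (\ref{eq.condizione Poisson}) on the dense torus (the paper cites the chart-invariance Lemma \ref{lemma.cambio di coordinate} for this reduction, where you use density of the torus and continuity of $[\pi,\pi]_{SN}$) and then conclude by the same direct computation. If anything, your bookkeeping is more complete than the paper's, which passes directly to the final monomial identity without displaying the Kronecker-delta terms whose pairwise cancellation by antisymmetry of $A$ you check explicitly, and which compresses both the reduction to $i\neq j,h,k$ and the case of coinciding indices $j,h,k$ into a single closing sentence.
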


\begin{proof}
By Lemma \ref{lemma.cambio di coordinate}, the structure of $\pi$ is invariant under change of affine chart. Thus, it is enough to prove the proposition on the open dense torus  $(\CC^*)^n$ on which $\pi$ is described as in (\ref{eq.forme equivarianti}). 
Because of equation (\ref{eq.condizione Poisson}), $\pi$ is a Poisson structure if and only if, for all $j,h,k$
\begin{eqnarray*}
0&=&\sum_{i=1}^n \pi_{ij}  \frac{\partial \pi_{hk}}{\partial z_i} + \pi_{ih}  \frac{\partial \pi_{kj}}{\partial z_i} + \pi_{ik}  \frac{\partial \pi_{jh}}{\partial z_i} =\\ 
&=& \sum_{i=1}^n a_{ij} \underline{z}^{\underline{\alpha}} z_i z_j  \frac{\partial (a_{hk} \underline{z}^{\underline{\alpha}} z_h z_k)}{\partial z_i} + a_{ih} \underline{z}^{\underline{\alpha}} z_i z_h  \frac{\partial (a_{kj} \underline{z}^{\underline{\alpha}} z_k z_j)}{\partial z_i} +
a_{ik} \underline{z}^{\underline{\alpha}} z_i z_k  \frac{\partial (a_{jh} \underline{z}^{\underline{\alpha}} z_j z_h)}{\partial z_i} = \\
&=& \underline{z}^{\underline \alpha}  \underline{z}^{\underline \alpha} z_j z_h z_k \sum_{i=1}^n \alpha_i   \left(a_{ij} a_{hk} + a_{ih} a_{kj}  + a_{ik} a_{jh}\right).\end{eqnarray*}
Since the matrix $A=\left(a_{ij}\right)_{ij}$ is antisymmetric, if two of the indexes $i, j,h,k$ are equal the sum in the parenthesis vanishes and the statement is proved.
 \end{proof}

\section{Proof of the main theorem}\label{sec.main thm}
Let $N$ be a $n$-dimensional lattice, $M$ its dual lattice and $\Sigma$ in $N_\RR$.
Let $X=TV(\Sigma)$ be the normal toric variety associated to the fan $\Sigma$.
Recall that, $\Sigma$ is smooth if and only if $X$ is smooth and $\Sigma$ is complete if and only if $X$ is compact.
Let $\pi\in  \Gamma(X, \bigwedge^2 \Theta_X) $ be an equivariant bi-vector field on $X$.
We prove the following
\begin{maintheorem}
Let $X$ be a smooth toric variety of dimension $n$ and $\pi \in \Gamma(X, \bigwedge^2 \Theta_X)$ an equivariant bi-vector field on $X$. 
Consider an integer $k > 0$ such that $2k < n$. Then the degeneracy locus
$X_{\leq 2k}$ is not empty and contains at least a component of dimension $\geq 2k+1$.
Moreover, if the degeneracy locus $X_{\leq 0}$ is not empty, it contains at least a curve.
If in addition $X$ is compact, then $X_{\leq 0}\neq \emptyset$. 
\end{maintheorem}

\begin{proof}
Let $X=TV(\Sigma)$ as above.
Let $Y_{\sigma_0}$ be the affine subvariety of $X$ corresponding to a maximal cone $\sigma_0$ in the fan $\Sigma$. Since $\Sigma$ is smooth, we can assume without loss of generality that $\sigma_0$ is generated by the standard basis $\{e_1, \ldots, e_n\}$ of $\ZZ^n$.
On $Y_{\sigma_0}$ there are coordinates $z_i:=\chi^{e_i}$ and the equivariant bi-vector field can be written as
\[ \pi= \sum_{j > i=1}^n  2a_{ij} z_1^{\alpha_1} \cdots z_i^{\alpha_i+1} \cdots z_j^{\alpha_j+1}\cdots z_n^{\alpha_n} \frac{\partial}{\partial z_i} \wedge\frac{\partial}{\partial z_j}, \]
where  $A=\left( a_{ij}\right)_{ij}$ is an antisymmetric matrix with complex coefficients. Let $\Pi=(a_{ij} \cdot {\underline{z}}^{\underline{\alpha}} \cdot z_i \cdot z_j)_{ij}$ be the matrix whose rank defines the rank of the form $\pi$.  Note that, if $a_{ij} \neq 0$ for some $i$ and $j$, the exponents satisfy 
\[\alpha_i, \alpha_j \geq -1,\quad \mbox{ and} \quad \alpha_h \geq 0 \ \ \forall \ h\neq i,j.   \] 
First suppose $\alpha_i \geq 0$, for all $i\in \{ 1, \ldots , n\}$. Let $k \geq 0$ such that $2k <n$. Consider the $(2k+1)$-dimensional orbit $O$  defined on $Y_{\sigma_0}$ by:
\[ \{  (z_1, \ldots , z_n) \in Y_{\sigma_0} \mid z_i =0 \mbox{ for } 1 \leq i \leq n-2k -1 \mbox{ and }   z_i\neq0 \mbox{ for } n-2k \leq i \leq n \}.\]
As observed at the end of Subsection \ref{sec.orbit-cone}, this orbit corresponds to the face $\tau$ of $\sigma_0$ generated by $\{e_1, \ldots, e_{n-2k-1}\}$.
The matrix $\Pi$ calculated on a point of $O$ has at least $n-2k-1$ zero rows and $n-2k-1$ zero columns, thus its  rank is less or equal then $2k$ (since its rank is even). 
Thus, $O \subset X_{\leq2k}$ is the component of dimension $2k+1$ we are looking for.

Suppose now there exists $h_1,\ldots, h_s \in \{1,\ldots, n\}$ such that $\alpha_{h_i} =-1$ for all $i\in \{ 1,\ldots s\}$ and $\alpha_{k} \geq 0$ for all $k \not \in \{h_1, \ldots, h_s\}$. 
If $s=1$, the above arguments prove the theorem.
To be precise, one can choose any orbit $O$ such that the coordinate $z_{h_1} \neq 0$. 
If $s>2$, the matrix $A$ is the zero matrix and there is nothing to prove.
If $s=2$, the matrix $A$ has just two non-zero coefficients. To simplify the notation, assume $h_1=1$, $h_2=2$. Thus, $a_{12} =-a_{12}\neq 0$ are the only two non zero coefficients, the exponents are $\underline{\alpha}=(-1,-1, \alpha_3, \ldots, \alpha_n)$ with $\alpha_i \geq 0$ for all  $ i \geq 3$, and  on $Y_{\sigma_0}$ the form $\pi$ becomes 
\begin{equation} \label{eq.forma di rango minore di due} \pi= 2a_{12} z^{\alpha_3}_3 \cdots z^{\alpha_n}_n \frac{\partial}{\partial z_{1}} \wedge  \frac{\partial}{\partial z_{2}}. \end{equation} 
Thus, its rank is $\leq 2$ on the whole $Y_{\sigma_0}$. 
Let's analyse the degeneracy locus $X_{\leq 0}$.  
If there exists at least one $\alpha_i >0$ for $i \geq 3$, on the $(n-1)$-dimensional orbit  
$\{(z_1, \ldots, z_n) \in Y_{\sigma_0} \mid z_i=0\}$ the form $\pi$ has rank $0$. 

The remaining case is when $\pi$ is expressed on $Y_{\sigma_0}$ by
\begin{equation} \label{eq.forma di rango due} \pi=   2a_{12} \frac{\partial}{\partial z_{1}} \wedge  \frac{\partial}{\partial z_{2}} \quad \mbox{with } a_{12} \neq 0. \end{equation}
In the compact case, we have to prove that there exists a subvariety of dimension $\geq 1$ on which the form $\pi$ 
has rank $=0$.
Consider the hyperplane $H_0:=\{(z_1,\ldots, z_n) \mid z_1+z_2 =0\} \subset N_\RR$. Recall that $\Sigma$ is a fan, thus every cone is strongly convex, and it is complete. 
By Lemma  \ref{lemma.cono}, there exists a maximal cone $\sigma \in \Sigma$, such that at least one of its generating ray $\rho=(\rho_1, \ldots, \rho_n)$ lies in the halfspace $H_{0}^-$.
By lemma \ref{lemma.cambio di coordinate}, on the affine chart $Y_\sigma$ corresponding to the cone $\sigma\in \Sigma$ the form $\pi$ becomes
\[  \pi= \sum_{j > i=1}^n  2b_{ij} w_1^{\beta_1} \cdots w_i^{\beta_i+1} \cdots w_j^{\beta_j+1}\cdots w_n^{\beta_n} \frac{\partial}{\partial w_i} \wedge\frac{\partial}{\partial w_j}. \]
The coordinates $w_i$ are the one on $Y_\sigma$ corresponding to the basis of $\sigma$ given by its rays and $\underline{\beta} = R^t \cdot \underline{\alpha}$, 
where $R$ is the matrix of the base change between the basis given by the rays of $\sigma$ and the canonical basis of $\ZZ^n$. Thus, 
\[ \left(\begin{array}{c} \beta_1 \\ \vdots \\ \beta_n \end{array}\right) =R^t  \left(\begin{array}{c} \alpha_1 \\ \vdots \\ \alpha_n \end{array}\right)=\left( \begin{array}{ccc} 
 \rho_1 & \ldots & \rho_n \\
r_{12} & \ldots & r_{n2} \\
\vdots & \ldots & \vdots \\
r_{1n} & \ldots & r_{nn}  \end{array}\right)   \cdot \left(\begin{array}{c} -1 \\ -1 \\ 0 \\ \vdots\\ 0 \end{array}\right) = \left(\begin{array}{c} - \rho_1 -\rho_2 \\ \beta_2 \\ \vdots \\\beta_n \end{array}\right).\]
Since $\rho=(\rho_1, \ldots, \rho_n) \in H_0^-$, $-\rho_1 - \rho_2>0$, and all the other exponent $\beta_i$ are  $\geq -1$. On this chart, we can be proved the existence of the required subvariety, as discussed above.

If $X$ is not compact, i.e. the fan is not complete, the argument above does not hold. The degeneracy locus $X_{\leq 0}$ is not empty if and only if we can find an affine subvariety $Y_{\sigma}\subset X$ on which $\pi$ has the form (\ref{eq.forma di rango minore di due}) with at least one $\alpha_i >0$ and we can be proved the existence of the required subvariety, as discussed above.
\end{proof}

The proof shows quite clearly where compactness is needed in our theorem. Moreover, the hypothesis for $\pi$ to be equivariant is necessary at least for the non compact case. Let's see some examples.
\begin{example}
Consider the toric variety $\CC^n$. The degeneracy locus $X_{\leq 0}$ of any equivariant bi-vector field of the form
\[ \pi= a \frac{\partial}{\partial z_i}\wedge \frac{\partial}{\partial z_j}, \quad \mbox{with } a\neq 0 \mbox{ and } i, j \in \{ 1,\ldots , n\} \]
is empty, since the rank of $\pi$ is two everywhere. 
\end{example}
\begin{example}
This example is due to Gay \cite{Gay}.
On  the toric variety $\CC^3$, consider the bi-vector field
\[ \pi=  z_3 \frac{\partial}{\partial z_1}\wedge \frac{\partial}{\partial z_2} +  z_2 \frac{\partial}{\partial z_1}\wedge \frac{\partial}{\partial z_3} +   2 z_1 \frac{\partial}{\partial z_2}\wedge \frac{\partial}{\partial z_3}.  \]
It is immediate to see that $\pi$ is not equivariat. One can verify, it is a Poisson structure, i.e. it satisfies the condition $[\pi,\pi]_{SN}=0$. 
The rank of $\pi$ is the rank of the matrix
\[
\left( \begin{array}{ccc}0 & z_3 & z_2 \\ -z_3 & 0 & z_1 \\ -z_2 & -z_1 & 0 \end{array}\right).
\]
Observe that, the degeneracy locus $X_{\leq 0} = \{ 0 \}$.
\end{example}

\end{document}